\documentclass{amsart}

\usepackage{lineno, hyperref}
\usepackage{amsthm}
\usepackage{amsmath, amssymb}
\usepackage[msc-links, nobysame, non-sorted-cites]{amsrefs}
\usepackage[all]{xy}
\usepackage{calrsfs}
\usepackage{mathpazo}


\newcommand{\R}{\mathbb R}

\newcommand{\Z}{\mathbb Z}

\newcommand{\X}{\mathfrak X}


\DeclareMathOperator{\diff}{Diff} 
\DeclareMathOperator{\pdiff}{PDiff}
\DeclareMathOperator{\mcg}{Mod}
\DeclareMathOperator{\pmcg}{PMod}

\DeclareMathOperator{\Ima}{Im}
\DeclareMathOperator{\lcm}{lcm}

\DeclareMathOperator{\cd}{cd}
\DeclareMathOperator{\vcd}{vcd}
\DeclareMathOperator{\per}{p} 

\DeclareMathOperator{\phome}{PHomeo}
\DeclareMathOperator{\Aut}{Aut}


\newcommand{\modn}{\mathcal{N}_g^k}     
\newcommand{\modno}{\mathcal{N}_g^1}    
\newcommand{\hiper}{\mathbb{H}^2}       
\newcommand{\Zp}{\Z/p}                  
\newcommand{\diffnk}{\pdiff(N_g;k)}     
\newcommand{\homek}{\phome(N_g;k)}      

\newcommand{\n}{\noindent}

\newtheorem{thm}{Theorem}[section]
\newtheorem{lem}[thm]{Lemma}
\newtheorem{prop}[thm]{Proposition}

\newtheorem{mainthm}{Theorem}
\newtheorem{maincor}[mainthm]{Corollary}

\theoremstyle{definition}

\newtheorem{rmk}[thm]{Remark}


\newcommand{\cuadro}[8]{\xymatrix @C=1.3cm @M=2mm{
	{#1} \ar[r]^-{#2} \ar[d]_-{#4} & {#3} \ar[d]^-{#5}  \\
	{#6} \ar[r]^-{#7} & {#8}   }}

\begin{document}

\title{Periodicity of the pure mapping class group \\ of non-orientable surfaces}

\author{Nestor Colin}
\address{Instituto de Matem\'aticas, Universidad Nacional Aut\'onoma de M\'exico
Oaxaca de Ju\'arez, Oaxaca 68000, M\'exico}
\email{ncolin@im.umam.mx} 
\email{rita@im.umam.mx}

\author{Rita Jiménez Rolland}

\author{Miguel A. Xicot\'encatl} 
\address{Departamento de Matem\'aticas, Centro de Investigaci\'on y de Estudios Avanzados del IPN, 
Mexico City 07360, Mexico}
\email{xico@math.cinvestav.mx}

\subjclass{Primary 57K20, 55N20, 55N35, 20J05; Secondary 57S05, 20H10.}


\keywords{Mapping class group, 
non-orientable surfaces,
periodic cohomology,
Farrell cohomology.
}

\maketitle

\markboth{\sc{Nestor Colin, Rita Jiménez Rolland and Miguel A. Xicot\'encatl}}{\sc{Periodicity of the pure mapping class group of non-orientable surfaces}}

\begin{abstract}

We show that the pure mapping class group $\mathcal{N}_{g}^{k}$ of a non-orientable closed surface of genus $g\geqslant 2$ with $k\geqslant 1$ marked points has $p$-periodic cohomology for each odd prime $p$ for which $\mathcal{N}_{g}^{k}$ has $p$-torsion. Using the Yagita invariant and the cohomology classes obtained by the representation of subgroups of order $p$, we obtain that the $p$-period is less than or equal to $4$ when $g\geqslant 3$ and $k\geqslant 1$. Moreover, combining the Nielsen realization theorem and a characterization of the $p$-period given in terms of normalizers and centralizers of cyclic subgroups of order $p$, 
we show that the $p$-period of $\mathcal{N}_{g}^{k}$ is bounded below by $4$, whenever $\modn$ has $p$-periodic cohomology, $g\geqslant 3$ and $k\geqslant 0$. These results provide partial answers to questions 
proposed by G. Hope and U. Tillmann.
\end{abstract}


\section{Introduction}

Let $\Gamma$ be a group of finite virtual cohomological dimension ($\vcd$) and let $p$ be a prime. The group $\Gamma$ is called {\it $p$-periodic} if there exists a positive integer $d$ such that the Farrell cohomology groups $\widehat{H}^i(\Gamma;M)$ and $\widehat{H}^{i+d}(\Gamma;M)$  have naturally isomorphic $p$-primary components for all $i\geqslant 0$ and for all $\mathbb{Z}\Gamma$-modules $M$. 
The least of such $d$ is called the {\it $p$-period} of $\Gamma$ and is denoted by $\per(\Gamma)$. Farrell cohomology extends Tate cohomology of finite groups to groups of finite $\vcd$ and in degrees above the $\vcd$ it agrees with the ordinary cohomology of the group.  In this paper we study the $p$-periodicity of the pure mapping class group of a non-orientable closed surface with at least one marked point. 

Let $\Sigma$ be a closed surface and $\{z_1, \dots, z_k\}$ a set of $k\geqslant 0$ distinct points in $\Sigma$, we call them {\it marked points}. Let $\diff(\Sigma ; k)$ be the group of diffeomorphisms of $\Sigma$ that preserve the set of marked points and let $\pdiff(\Sigma ; k)$ be the subgroup of diffeomorphisms that fix the marked points pointwise.  
If the surface $\Sigma$ is orientable, we consider the corresponding subgroups $\diff^{\,+}(\Sigma; k )$ and $\pdiff^{\,+}(\Sigma; k )$ of orientation-preserving diffeomorphisms.  The {\it pure mapping class group}  of $\Sigma$ with $k$ marked points is the group of isotopy classes of $\pdiff(\Sigma; k)$ if $\Sigma$ is non-orientable and the group of isotopy classes of $\pdiff^{\,+}(\Sigma; k )$ when $\Sigma$ is orientable.
We use the notation $\Gamma_g^k:=\pmcg(S_g;k)$ and $\mathcal{N}_g^k:=\pmcg(N_g;k)$, where $S_g$ and $N_g$ denote, respectively, a closed connected orientable and non-orientable surface of genus $g$. If the set of marked points
is empty, we omit $k$ from the notation.

It is well known that the groups $\Gamma_g^k$ and $\mathcal{N}_g^k$ have finite $\vcd$ and their Farrell cohomology and $p$-periodicity have been previously studied in the literature. For instance, it is known that for an orientable closed surface of genus $g>1$ the group $\Gamma_g$ is never $2$-periodic, and for $p$ an odd prime Y. Xia determined in \cite{Xia92} all genera $g$ for which $\Gamma_g$ is $p$-periodic. In \cite{GMX92}  H.H. Glover, G. Mislin and Y. Xia obtained a formula for the $p$-period $\per(\Gamma_g)$ that holds whenever the group is $p$-periodic. In contrast, Q. Lu proved in \cite{lu01per} that for an orientable surface of genus $g\geqslant 1$ with at least one marked point, the group $\Gamma_g^k$ is always $p$-periodic with $p$-period equal to $2$. Using these results, Xia \cites{Xia92Farr, Xia92Farr2, Xia95} and Lu \cite{lu01per} determined the $p$-primary component of the Farrell cohomology $\widehat{H}^*(\Gamma_{p-1}^k; \Z)_{(p)}$, $\widehat{H}^*(\Gamma_{(p-1)/2}^k; \Z)_{(p)}$ and $\widehat{H}^*(\Gamma_{p}; \Z)_{(p)}$ where $k\geqslant 0$ and $p$ is an odd prime. Furthermore, Lu \cite{LuFarrell} obtained all the $p$-primary components of the Farrell cohomology of the pure mapping class group $\Gamma_g^k$ of a surface of low genus $g=1,2,3,$ when $\Gamma_g^k$ has $p$-torsion, $p$ is an odd prime, and $k\geqslant 1$. 

On the other hand,  G. Hope and U. Tillmann  investigated in \cite{HT09} the $p$- periodicity of the Farrell cohomology of the mapping class group $\mathcal{N}_g$ of a closed non-orientable surface of genus $g\geqslant 3$.  They were able to determine the precise conditions required for this cohomology to exhibit $p$-periodic behavior.  We contribute to the understanding of the $p$-periodicity of the pure mapping class group $\modn$ of a closed non-orientable surface with at least one marked point by proving the following results (see \cite{HT09}*{Question 5.2}).

\begin{mainthm}\label{Thm:Main 1 pperiodicity}
    Let $g\geqslant 2$, $k\geqslant 1$ and $p$ be an odd prime. The pure mapping class group $\modn$ has $p$-periodic cohomology whenever $\modn$ contains $p$-torsion.
\end{mainthm}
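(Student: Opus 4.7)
The plan is to invoke a classical criterion of Swan (see Brown, \emph{Cohomology of Groups}, Ch.~X): a group $\Gamma$ of finite $\vcd$ has $p$-periodic Farrell cohomology if and only if every elementary abelian $p$-subgroup of $\Gamma$ has rank at most $1$, equivalently, $\Gamma$ contains no subgroup isomorphic to $\Z/p\times\Z/p$. Since $\modn$ is known to have finite $\vcd$, the theorem reduces to proving that, for every odd prime $p$, the group $\modn$ (with $g\geqslant 2$ and $k\geqslant 1$) contains no copy of $(\Z/p)^2$.

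To rule out such a subgroup I would argue geometrically, using Nielsen realization in the non-orientable, marked-point setting: any finite subgroup $G\leqslant\modn$ lifts to a subgroup of $\pdiff(N_g;k)$ acting faithfully on $N_g$ and fixing each marked point $z_1,\dots,z_k$ pointwise. After averaging a Riemannian metric, I may assume $G$ acts by isometries of $N_g$.

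Now suppose for contradiction that $G\cong(\Z/p)^2$ with $p$ odd embeds in $\modn$. Since $k\geqslant 1$, the point $z_1$ is fixed by $G$, so the derivative at $z_1$ gives a representation $\rho\colon G\to O(T_{z_1}N_g)\cong O(2)$. An isometry of a connected Riemannian manifold that fixes a point and has identity derivative there must be the identity, so $\rho$ is injective. But every element of odd order in $O(2)$ lies in $SO(2)\cong S^1$, whose finite subgroups are cyclic; in particular $O(2)$ contains no copy of $(\Z/p)^2$ for $p$ odd. This contradiction completes the argument, and the criterion of Swan then yields $p$-periodicity.

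The main technical obstacle I anticipate is pinning down a clean statement of Nielsen realization in the non-orientable case with marked points, which should be deducible from the orientable case by lifting $G$-actions across the orientation double cover $S_{g-1}\to N_g$ and invoking Kerckhoff's theorem equivariantly (and equivariantly on the preimages of the marked points). Once that realization is in place, the tangent-representation step is essentially automatic, and the rest of the proof is formal.
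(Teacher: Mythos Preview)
Your argument is correct, and it takes a genuinely different route from the paper's. Both proofs begin with the same reduction via Brown's criterion to ruling out a copy of $(\Z/p)^2$, and both invoke Nielsen realization for non-orientable surfaces with marked points. From there they diverge. The paper lifts the realized $(\Z/p)^2$ to the orientation double cover $S_{g-1}\to N_g$, checks that the lifted diffeomorphisms (having odd order) fix the preimages of the marked points individually, and thereby embeds $(\Z/p)^2$ into the pure mapping class group $\Gamma_{g-1}^{2k}$; this contradicts Lu's theorem that $\Gamma_{g-1}^{2k}$ is $p$-periodic for $g-1\geqslant 1$, $k\geqslant 1$. Your argument instead stays on $N_g$: the derivative at a marked point gives a faithful representation of the realized group into $O(2)$, and no $(\Z/p)^2$ with $p$ odd fits inside $O(2)$.

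Your approach is more self-contained---it does not appeal to Lu's result as a black box, and in effect reproves the key step of Lu's argument directly in the non-orientable setting. The paper's approach, by contrast, packages the work as a clean reduction to the orientable case. It is worth noting that the tangent representation $\rho\colon\diff(N_g;*)\to GL_2(\R)$ you use here is exactly the tool the paper deploys later, in Section~\ref{Sec:UpperBound}, to bound the Yagita invariant; so the paper has this representation in hand but reserves it for the upper bound on the $p$-period rather than for the periodicity statement itself. Your ``technical obstacle'' is a non-issue: the paper simply cites \cite{Col_Xi22}*{Theorem 5.2} for Nielsen realization in this setting.
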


As we observe in Remark \ref{Rmk:2Periodicity} below, the argument from \cite[Proof of Lemma 4.1]{HT09} actually shows that $\modno$ is not $2$-periodic for  $g\geqslant 3$.
Using the Yagita invariant and adapting Lu's methods from \cite{lu01per} to the non-orientable case $\modn$, we find an upper bound for the $p$-period in Theorem \ref{Thm:UpperBound}, proving that $\per(\modn) \leqslant 4$ whenever the group has $p$-torsion, $g\geqslant 3$ and $k\geqslant 1$. In addition, using a different description of the $p$-period given in terms of the normalizers and centralizers of subgroups of order $p$ of $\modn$, 
we prove in Theorem \ref{Thm:LowerBound} that $\per(\modn) \geqslant 4$ whenever $\modn$ has $p$-periodic cohomology and $k\geqslant 0$.
Thus, the following result follows.

\begin{mainthm}\label{Thm:Main 2 pperiod}
    Let $g\geqslant 3$, $k\geqslant 1$ and $p$ be an odd prime. If the pure mapping class group $\modn$ contains $p$-torsion, then its $p$-period  $\per(\modn)$ is $4$.
\end{mainthm}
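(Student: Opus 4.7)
The plan is to deduce Theorem \ref{Thm:Main 2 pperiod} as a direct consequence of three ingredients that the introduction flags. First, since $\mathcal{N}_g^k$ is assumed to contain $p$-torsion and $g\geqslant 3$, $k\geqslant 1$, Main Theorem~\ref{Thm:Main 1 pperiodicity} guarantees that the Farrell cohomology of $\mathcal{N}_g^k$ is $p$-periodic, so $\per(\mathcal{N}_g^k)$ is a well-defined positive integer. The proof then just combines Theorem~\ref{Thm:UpperBound} with Theorem~\ref{Thm:LowerBound}.

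For the upper bound $\per(\mathcal{N}_g^k)\leqslant 4$, the approach of Theorem~\ref{Thm:UpperBound} (to be adapted from Lu's orientable treatment in \cite{lu01per}) is to compute, or bound, the Yagita invariant of $\mathcal{N}_g^k$ at $p$ by analyzing elementary abelian $p$-subgroups, and then to produce an explicit periodicity class of degree dividing $4$ coming from the representations of cyclic subgroups of order $p$. Since the Yagita invariant divides the $p$-period and an explicit class of degree at most $4$ is exhibited, one concludes $\per(\mathcal{N}_g^k)\mid 4$.

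For the matching lower bound $\per(\mathcal{N}_g^k)\geqslant 4$, the key input behind Theorem~\ref{Thm:LowerBound} is Swan's formula: for a group $\Gamma$ of finite $\vcd$ with $p$-periodic Farrell cohomology,
\[
\per(\Gamma)\;=\;\lcm\bigl\{\,2\,[N_\Gamma(\langle\sigma\rangle):C_\Gamma(\langle\sigma\rangle)]\;:\;\sigma\in\Gamma\text{ of order }p\,\bigr\}.
\]
Thus it suffices to exhibit a single cyclic subgroup $\langle\sigma\rangle\leqslant\mathcal{N}_g^k$ of order $p$ whose normalizer strictly contains its centralizer. To produce such an element of the normalizer, one uses the Nielsen realization theorem: every finite cyclic subgroup $\langle\sigma\rangle$ of $\mathcal{N}_g^k$ is realized by an actual order-$p$ isometry $\tilde\sigma$ of $N_g$ preserving the set of marked points, and conjugation by a suitable orientation-reversing/reflecting symmetry of the quotient orbifold $N_g/\langle\tilde\sigma\rangle$ sends $\sigma$ to $\sigma^{-1}$, forcing $[N_{\mathcal{N}_g^k}(\langle\sigma\rangle):C_{\mathcal{N}_g^k}(\langle\sigma\rangle)]\geqslant 2$.

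The main obstacle I expect is in the lower bound: one has to verify, uniformly in $g\geqslant 3$, $k\geqslant 0$, and in the odd prime $p$ giving torsion, that such a conjugating symmetry is available for at least one class of order-$p$ elements. This requires a careful case analysis of the possible conjugacy classes of order-$p$ subgroups in $\mathcal{N}_g^k$, organized via the combinatorics of the quotient orbifold $N_g/\langle\tilde\sigma\rangle$ (genus, number and types of cone points, distribution of marked points among orbits), together with the realization afforded by Nielsen's theorem. The upper bound is expected to be more routine, essentially a translation of Lu's Yagita-invariant computation, with extra care because $N_g$ is non-orientable and the representation theory of $\Z/p$ acting on the marked points and on $H_1(N_g;\Z)$ differs from the orientable case.
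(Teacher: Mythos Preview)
Your overall architecture is exactly the paper's: combine Theorem~\ref{Thm:Main 1 pperiodicity} (so $\per(\mathcal{N}_g^k)$ exists), Theorem~\ref{Thm:UpperBound} ($\leqslant 4$), and Theorem~\ref{Thm:LowerBound} ($\geqslant 4$). A few of your sketched details for the intermediate results drift from what the paper actually does, and are worth correcting.

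For the upper bound, the paper does not work with elementary abelian $p$-subgroups or with the $\Z/p$-action on $H_1(N_g;\Z)$; it uses the differential representation $\rho:\diff(N_g;*)\to GL_2(\R)$, $f\mapsto df_*$, on the tangent space at the marked point. For each cyclic $\pi\leqslant\modno$ of order $p$, Nielsen realization gives a faithful $\widetilde\rho:\widetilde\pi\to GL_2^+(\R)$, and one pulls back a degree-$4$ class (essentially $c_1^2$, obtained from $p_1\in H^4(BO(2);\Z)$) that restricts nontrivially to $H^4(\pi;\Z)$; this gives $m(\pi,\modno)\leqslant 2$, hence Yagita $\leqslant 4$. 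Also, the relation used is that for $p$-periodic groups the Yagita invariant \emph{equals} the $p$-period, not merely divides it; the conclusion is $\per\leqslant 4$, not $\per\mid 4$.

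For the lower bound, the normalizer/centralizer formula the paper invokes (Theorem~\ref{Thm:GMX pperiod}) carries an extra factor $p^d$, $d\geqslant 0$; this does not affect your argument since any index $\geqslant 2$ still forces $\per\geqslant 4$. More substantively, the paper does \emph{not} proceed by a case analysis over conjugacy classes of order-$p$ subgroups. Instead it proves uniformly (Theorem~\ref{Thm:f y f-1 conjugados}) that every order-$p$ diffeomorphism $f\in\diffnk$ is conjugate to $f^{-1}$ in $\homek$, by writing down an explicit automorphism $\psi$ of the associated NEC group $\Gamma$ intertwining the surface-kernel epimorphisms for $f$ and $f^{-1}$, and then realizing $\psi$ geometrically via Macbeath's theorem. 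Your intuition about a symmetry of the quotient orbifold is the right geometric picture, but the actual mechanism is this NEC-group argument rather than an ad hoc reflection.
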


It is worth pointing out that the lower bound $\per(\modn)\geqslant 4$ that we find in Theorem \ref{Thm:LowerBound} applies also to the case without marked points $k=0$. Combining this result with \cite[Theorem 1.1]{HT09} yields the following result, which partially solves  \cite[Question 5.1]{HT09} about finding a lower bound for the $p$-period of $\mathcal{N}_g$.

\begin{maincor}\label{Cor:Main LowerBound}
    Let $p$ be an odd prime and suppose that the mapping class group $\mathcal{N}_g$ contains $p$-torsion. Then $\per(\mathcal{N}_g) \geqslant 4$ unless that
	$g=lp+2$ for some $l>0$, and for $0\leqslant t < p$ with $l\equiv -t \mod{p}$ we have that $l+t+2p>tp$.
\end{maincor}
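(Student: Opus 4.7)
The plan is essentially to combine the two results already available: Theorem \ref{Thm:LowerBound} of this paper and \cite[Theorem 1.1]{HT09}. The first, as the authors remark explicitly after its statement, gives the lower bound $\per(\mathcal{N}_g^k)\geqslant 4$ for every $k\geqslant 0$ whenever $\mathcal{N}_g^k$ is known to have $p$-periodic cohomology; in particular it applies to $\mathcal{N}_g=\mathcal{N}_g^0$. So all the work reduces to identifying the genera $g$ for which $\mathcal{N}_g$ itself is $p$-periodic, which is exactly what Hope and Tillmann classify.

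First I would quote \cite[Theorem 1.1]{HT09} verbatim, which (for $g\geqslant 3$ and an odd prime $p$ dividing $|\mathcal{N}_g|$) characterizes $p$-periodicity of the full mapping class group $\mathcal{N}_g$ by the arithmetic of $g$ written in the form $g=lp+2$, together with the residue $t\in\{0,1,\dots,p-1\}$ with $l\equiv -t\pmod p$ and the comparison between $l+t+2p$ and $tp$. A quick check shows that the ``unless'' clause in the statement of Corollary \ref{Cor:Main LowerBound} is written so as to match exactly the non-$p$-periodic case of that classification; this matching is bookkeeping rather than mathematics.

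With that identification in hand, the corollary is immediate. Assume $\mathcal{N}_g$ has $p$-torsion and $g$ does \emph{not} satisfy the exceptional arithmetic condition. Then \cite[Theorem 1.1]{HT09} ensures that $\mathcal{N}_g$ has $p$-periodic cohomology, so Theorem \ref{Thm:LowerBound} (applied with $k=0$, which the paper explicitly allows) yields $\per(\mathcal{N}_g)\geqslant 4$, as required.

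I do not foresee a genuine obstacle; the only point that warrants care is verifying that Theorem \ref{Thm:LowerBound} is stated with hypotheses broad enough to cover the unmarked case $k=0$ (the authors confirm this in the paragraph preceding the corollary), and that the exceptional arithmetic in the statement is a faithful restatement of the failure-of-$p$-periodicity regime of \cite[Theorem 1.1]{HT09}. Once these are checked, the proof is a one-line combination of the two cited results.
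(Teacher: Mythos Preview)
Your proposal is correct and is precisely the argument the paper intends: the corollary is stated immediately after the observation that Theorem~\ref{Thm:LowerBound} covers the case $k=0$, and the paper explicitly says the result follows by combining that theorem with \cite[Theorem~1.1]{HT09}. There is no separate proof in the paper beyond this combination, so your write-up matches it essentially verbatim (the phrase ``$p$ dividing $|\mathcal{N}_g|$'' should be replaced by ``$\mathcal{N}_g$ has $p$-torsion,'' since the group is infinite, but this is cosmetic).
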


\bigskip

\n
{\bf Outline.} The paper is organized as follows. In Section \ref{Sec:pperiodicity} we use the connection between the mapping class groups of a non-orientable surface and its orientable double cover, and the result of Q. Lu \cite[Theorem 1.7]{lu01per} to prove Theorem \ref{Thm:Main 1 pperiodicity}. In Section \ref{Sec:UpperBound} we recall the definition of the Yagita invariant and we use cohomology classes of subgroups of order $p$ to prove that this invariant is lower than or equal to $4$.  This gives us the upper bound for the $p$-period of $\modno$ in Theorem \ref{Thm:UpperBound OnePoint} and a Birman sequence argument allows us to obtain the upper bound for general $k\geqslant 1$ in Theorem \ref{Thm:UpperBound}. Finally, in Section \ref{Sec:Lower Bound} we prove that $4$ is also a lower bound of the $p$-period of $\modn$, when $k\geqslant 0$; see Theorem \ref{Thm:LowerBound}. For this we use Theorem \ref{Thm:GMX pperiod}, a characterization of the $p$-period given in terms of the index of the normalizers and centralizers of $\modn$ of subgroups of order $p$ over their conjugacy classes. 

One of the main ingredients to obtain the lower bound is Theorem \ref{Thm:f y f-1 conjugados}, which states that $f$ and $f^{-1}$ are conjugated in $\homek$ whenever $f\in \diffnk$ is an element of order $p$. Its proof involves a deep analysis of the automorphisms of non-Euclidean crystallographic groups (see, for instance, \cite[Section 3]{Bujalance15}). It is based on the classic work of J. Nielsen in \cite{Nielsen37}, in which he introduced the notion of fixed point data of a finite order homeomorphism of an orientable surface and characterized its conjugacy classes using this notion.

\bigskip


\section{The p-periodicity of the pure mapping class group}
\label{Sec:pperiodicity}

Let $p$ be an odd prime.  The main purpose of this section is to show that the pure mapping class group  $\modn$ has $p$-periodic cohomology, whenever $\modn$
contains $p$-torsion, for $g \geqslant 2$ and $k \geqslant 1$. The proof relies on a result of Q. Lu \cite[Theorem 1.7]{lu01per}, which states that, in the orientable case, the group
$\Gamma_g^k$ has $p$-periodic cohomology whenever the group contains $p$-torsion, for $g, k \geqslant 1$. It also uses the relation between the mapping class group of a non-orientable surface and the mapping class group of its orientable double cover that we recall next.

For $N_g$ a closed connected non-orientable surface of genus $g$,  the non-orientable double cover can
be constructed (up to isomorphism) as follows. Let $S_{g-1}$ be a closed orientable surface of genus $g-1$, embedded in $\R^3$ such that $S_{g-1}$ 
is invariant under reflections in the $xy-$, $yz-$, and $xz-$ planes. Let $\sigma :S_{g-1} \to S_{g-1}$ be the orientation reversing 
homeomorphism  
$$  \sigma(x, y, z)  =  (-x, -y, -z) .$$

Then the quotient  $S_{g-1}  /\langle \sigma\rangle$ is homeomorphic to $N_g$ and the natural projection  $\pi : S_{g-1}  \to N_g$ is 
a double cover of $N_g$ such that $\sigma$ becomes a covering transformation
$$
\xymatrix{
 S_{g-1}  \ar[rr]^\sigma \ar[rd]_\pi  & &S_{g-1}   \ar[ld]^\pi \\ 
 &  N_g  &  
 }
$$
The following result is well-known (see for example \cite[Lemma 2.2]{GGM18}).

\begin{lem}
Every diffeomorphism $f : N_g \to N_g$ admits exactly two liftings  $S_{g-1} \to S_{g-1}$, one of which
preserves orientation.
\end{lem}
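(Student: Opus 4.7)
The plan is to invoke the standard covering-space lifting criterion applied to the double cover $\pi : S_{g-1} \to N_g$, whose deck transformation group is $\{1, \sigma\} \cong \Z/2$. Setting $H := \pi_* \pi_1(S_{g-1}) \subset \pi_1(N_g)$, a lift $\tilde f : S_{g-1} \to S_{g-1}$ of $f \circ \pi$ through $\pi$ exists if and only if $(f \circ \pi)_* \pi_1(S_{g-1}) \subseteq H$, i.e.\ $f_*(H) \subseteq H$. Once a lift exists, the general theory of regular coverings says that the full set of lifts is the $\{1,\sigma\}$-orbit of any one lift, so there are exactly two lifts, namely $\tilde f$ and $\sigma \circ \tilde f$.

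The key point to establish is therefore that $H$ is invariant under the automorphism $f_*$ of $\pi_1(N_g)$. I would argue that $H$ is a \emph{characteristic} subgroup: indeed, $H$ coincides with the kernel of the orientation homomorphism $w_1 : \pi_1(N_g) \to \Z/2$ sending a loop $\gamma$ to $0$ or $1$ according to whether a transverse frame returns to itself with the same orientation after being transported along $\gamma$. Equivalently, $H$ is singled out among the (many) index-$2$ subgroups of $\pi_1(N_g)$ as the unique one whose associated double cover has orientable total space. Since these are topologically intrinsic descriptions and $f$ is a diffeomorphism, $f_*(H) = H$.

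With $f_*(H) = H$ in hand, the lifting criterion gives the existence of $\tilde f$ and the counting above gives exactly two lifts. Finally, each lift is a self-diffeomorphism of the orientable surface $S_{g-1}$, hence either preserves or reverses orientation; since $\sigma$ is orientation-reversing, the two lifts $\tilde f$ and $\sigma \circ \tilde f$ have opposite orientation behaviour, so exactly one of them preserves orientation.

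The main obstacle is the middle step — justifying that $f_*$ preserves the orientation subgroup $H$. For $g \geqslant 2$ there are $2^g - 1$ different index-$2$ subgroups of $\pi_1(N_g)$, so this is not automatic from subgroup-counting; it requires the topological/characteristic description of $H$ above. Everything else is a direct application of covering-space theory.
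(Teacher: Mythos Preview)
Your argument is correct. The paper does not supply its own proof of this lemma; it records the statement as well-known and refers the reader to \cite[Lemma~2.2]{GGM18}. Your route via the covering-space lifting criterion---together with the observation that the orientation subgroup $H=\ker w_1$ is preserved by any self-diffeomorphism (since $f^*TN_g\cong TN_g$ forces $f^*w_1=w_1$), and that the two lifts differ by the orientation-reversing deck transformation $\sigma$---is the standard argument and is complete as written.
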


\medskip

Furthermore, in the case $f \in \diff(N_g ; k)$, if $\widetilde f: S_{g-1} \to S_{g-1}$ is the orientation preserving lifting of $\pi$, 
then $\widetilde f \in \diff^+(S_{g-1} ; 2k)$. Namely, if $\{ z_1, \dots, z_k \}  \subset N_g$ is the set of marked points, 
let $\tilde z_1, \dots, \tilde z_k \in S_{g-1}$ be such that
$\pi^{-1}(z_i)  =  \{  \tilde z_i  \,, \, \sigma(\tilde z_i)\}$ and take $\{  \tilde z_1 ,  \sigma(\tilde z_1),  \dots  ,  \tilde z_k,  \sigma(\tilde z_k)\}$
as the set of marked points in $S_{g-1}$. Note that if $f(z_i) = z_j$, then $\widetilde f$ restricts to a bijection between the fibers
$\{  \tilde z_i  \,, \, \sigma(\tilde z_i)\}$
  and
$\{  \tilde z_j  \,, \, \sigma(\tilde z_j)\}$. 
Thus, there is a natural way to choose a lift of $f \in \diff(N_g; k)$ in a continuous manner by taking $\widetilde f \in \diff(S_{g-1}; 2k)$ to be orientation preserving. This choice defines a group homomorphism  $ \tilde{\phi}  : \diff(N_g ; k)  \to  \diff^+(S_{g-1} ;  2k)$ which induces a 
homomorphism  $\phi: \mcg(N_g ; k) \to  \mcg(S_{g-1} ; 2k)$ between the corresponding mapping class groups and makes the following diagram commutative
$$ 
\xymatrix{
\diff(N_g ;  k)  \ar[r]^{\tilde{\phi}}  \ar[d]  &  \diff^+(S_{g-1} ;  2k)   \ar[d] \\
\mcg(N_g ; k)   \ar[r]^\phi  &  \mcg(S_{g-1} ; 2k) .
}
$$ 

Here $\mcg(\Sigma; k)$ denotes the group of isotopy classes of elements of $\diff(\Sigma,k)$ (of $\diff^+(\Sigma,k)$ if the surface $\Sigma$ is orientable); 
the group $\pmcg(\Sigma; k)$ is a finite index normal subgroup of $\mcg(\Sigma; k)$.

The following result was proven in \cite[Key Lemma 2.1]{HT09} and \cite[Theorem 1.1]{GGM18}.

\begin{prop}\label{Prop:Injective}
Let $N_g$ be a non-orientable surface and let $S_{g-1}$ be its orientable double cover. The homomorphism
$ \phi  :  \mcg(N_g ; k)   \to  \mcg(S_{g-1} ; 2k) $ is injective for $g \geqslant 3$ if $k=0$ and for all $g \geqslant 1$ if $k\geqslant 1$. 
\end{prop}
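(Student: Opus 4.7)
My plan is to analyze the kernel of $\phi$. Suppose $[f]\in\mcg(N_g;k)$ lies in $\ker(\phi)$; then the orientation-preserving lift $\widetilde{f}\in\diff^+(S_{g-1};2k)$ is isotopic to the identity through some path $\widetilde{H}_t$ in $\diff^+(S_{g-1};2k)$. The goal is to conclude that $f$ itself is isotopic to $\mathrm{id}_{N_g}$ within $\diff(N_g;k)$, so that $[f]=1$.

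A preliminary observation is that $\widetilde{f}$ commutes with the covering involution $\sigma$. Indeed, $\sigma\widetilde{f}\sigma^{-1}$ is again a lift of $f$ (since $\pi\sigma=\pi$), and it is orientation-preserving because conjugation by the orientation-reversing $\sigma$ preserves the orientation class; uniqueness of the orientation-preserving lift then forces $\sigma\widetilde{f}\sigma^{-1}=\widetilde{f}$. The proof therefore reduces to producing a \emph{$\sigma$-equivariant} isotopy from $\widetilde{f}$ to the identity, since any such path, being invariant under the deck transformation, descends through $\pi$ to a genuine isotopy from $f$ to $\mathrm{id}_{N_g}$ in $\diff(N_g;k)$.

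To produce the equivariant isotopy I would invoke the contractibility of the identity component $\diff^+_0(S_{g-1};2k)$ in the range covered by the statement (Earle--Eells for closed $S_{g-1}$ with $g-1\geqslant 2$; Earle--Schatz once enough marked points are present, which handles all $g\geqslant 1$ when $k\geqslant 1$). Under this hypothesis the space of paths from $\widetilde{f}$ to the identity is contractible and nonempty, and $\widetilde{H}_t\mapsto\sigma\widetilde{H}_t\sigma^{-1}$ defines a $\Z/2$-action on it (well defined because $\sigma$ fixes both endpoints). A fixed point of this action---obtained by averaging the time-dependent vector field generating $\widetilde{H}_t$ with its $\sigma$-conjugate and integrating the resulting $\sigma$-invariant field---yields the desired equivariant path. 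For the low-genus cases with marked points that are not covered directly by the contractibility statement, one can alternatively run an induction on $k$ via the Birman exact sequence relating $\mcg(N_g;k)$ to $\mcg(N_g;k-1)$ together with its analogue on the double cover, and apply the five lemma to the resulting commutative diagram, reducing to the already established cases.

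The main obstacle is the equivariant averaging step. The diffeomorphism group is not additively closed, so one cannot average isotopies directly; one must descend to the level of time-dependent vector fields, average there to produce a $\sigma$-invariant field that still vanishes at each of the $2k$ marked points for every time, and then integrate back to a flow. Verifying that the resulting flow preserves the pairing $\{\widetilde z_i,\sigma(\widetilde z_i)\}$ of marked points and remains in $\diff^+(S_{g-1};2k)$ throughout is delicate but central, and it is precisely this step where the genus hypotheses enter, through the contractibility input that guarantees the space of isotopies on which $\sigma$ acts has the right homotopy type to extract a fixed point.
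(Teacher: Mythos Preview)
The paper does not prove this proposition at all: it simply records it as a known result, citing \cite[Key~Lemma~2.1]{HT09} and \cite[Theorem~1.1]{GGM18}. So there is no ``paper's own proof'' to match; any comparison is really with the arguments in those references, which proceed algebraically (via the action on the fundamental group and Birman--Hilden type considerations), not by averaging isotopies.

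More importantly, your proposed argument has a genuine gap at the averaging step. If $\widetilde{H}_t$ is generated by the time-dependent field $X_t$, then $\sigma\widetilde{H}_t\sigma^{-1}$ is generated by $\sigma_*X_t$, and the average $Y_t=\tfrac{1}{2}(X_t+\sigma_*X_t)$ is indeed $\sigma$-invariant. But the flow of $Y_t$ is \emph{not} in any sense the average of the two flows: there is no reason its time-$1$ map should coincide with the common endpoint $\mathrm{id}$ (or $\widetilde f$, depending on your convention). Integrating an averaged vector field simply does not preserve endpoints of the original isotopies, so you have not produced a $\sigma$-equivariant path from $\widetilde f$ to the identity. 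Equivalently, what you really need is that the two $\sigma$-fixed points $\widetilde f$ and $\mathrm{id}$ lie in the same path component of the fixed-point set $(\diff^+_0(S_{g-1};2k))^{\sigma}$; contractibility of the ambient space alone does not give this, and your averaging trick does not supply it either.

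If you want to rescue the analytic approach, you would need an \emph{equivariant} contraction of $\diff^+_0(S_{g-1};2k)$ to the identity (for instance by checking that the Earle--Eells/Teichm\"uller-theoretic retraction is natural under the $\sigma$-action), which is a substantially stronger input than what you invoke. The cleaner route, and the one taken in the cited references, is algebraic: for the relevant $(g,k)$ the isotopy class of a diffeomorphism is detected by its action on $\pi_1$ (Baer--Epstein), and one argues directly that if the lift $\widetilde f$ acts by an inner automorphism on $\pi_1(S_{g-1})$ then $f$ acts by an inner automorphism on $\pi_1(N_g)$, using that $\pi_1(S_{g-1})$ sits as an index-$2$ subgroup and that $\widetilde f$ commutes with $\sigma$.
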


\medskip

We now prove that, if $g\geqslant 2$ and $k\geqslant 1$, the group $\modn$ has $p$-periodic cohomology whenever $\modn$ has $p$-torsion.

\begin{proof}[Proof of the Theorem \ref{Thm:Main 1 pperiodicity}] 
Suppose that there exist some $g\geqslant 2$, $k\geqslant 1$ and some odd prime $p$ such that $\modn$ has $p$-torsion and is not $p$-periodic.
Then there exists $\Zp \times \Zp   \leqslant \modn$ (see \cite[Theorem X.6.7]{Brown82Coh}).  From Proposition \ref{Prop:Injective}, it follows that
$\Zp \times \Zp \cong \phi(\Zp \times \Zp )  \leqslant  \mcg(S_{g-1} ; 2k)$. 

By the Nielsen realization theorem for non-orientable surfaces (see \cite[Theorem 5.2]{Col_Xi22}), we can find $f, f'  \in \diffnk$ representing 
the generators of the subgroup $\Zp \times \Zp$ of $\modn$, such that
$$  f^p =1;  \qquad  f'^p =1;  \qquad   f f'  =  f'  f  .$$
Consider the diffeomorphisms  $\tilde{\phi}(f), \tilde{\phi}(f')\in\diff^+(S_{g-1}; 2k)$.  Since $f$ and $f'$ fix the marked points $z_i$ individually, then $\tilde{\phi}(f)$ and  $\tilde{\phi}(f')$ restrict to permutations of the fiber
$\{  \tilde z_i  \,, \, \sigma(\tilde z_i)\}$ for $i=1, \dots, k$. 
Notice that $\tilde{\phi}(f)$ and
$\tilde{\phi}(f')$ have odd order $p$, therefore they must preserve each of the marked
points of $S_{g-1}$ individually, i.e. $\tilde{\phi}(f), \tilde{\phi}(f')\in\pdiff^+(S_{g-1}; 2k)$. From the commutativity of the above diagram, it follows that
$\Zp \times \Zp \cong\phi(\Zp \times \Zp )$ is actually a subgroup of the pure mapping class group $\Gamma_{g-1}^{2k}$. This contradicts \cite[Theorem 1.7]{lu01per} which states that $\Gamma_{g-1}^{2k}$ has $p$-periodic cohomology if $g-1\geqslant 1$ and $k\geqslant 1$.
\end{proof}

\medskip

\begin{rmk} The strategy above can be used to prove $p$-periodicity in $\mathcal{N}_g$ from the cases where $\Gamma_{g-1}$ is known to be $p$-periodic  (see for instance \cite[Theorems 1, 2 and 3]{Xia92}).
However, there are cases where the $p$-periodicity of $\mathcal{N}_g$ cannot be derived from the orientable case $\Gamma_{g-1}$, as shown in \cite[Theorem 1.1 and Remark 4.4]{HT09}.
\end{rmk}

\begin{rmk}[\bf $\modno$ is not $2$-periodic]\label{Rmk:2Periodicity}
Take $S_{g-1}$ embedded in $\mathbb{R}^3$ as before and let $(x_0,0,0)\in S_{g-1}$ be a point where the surface intersects with the $x$-axis. Since the embedding is symmetric with respect to the reflection by the $yz-$plane, the point $(-x_0,0,0)$  is also in $S_{g-1}$.
Consider the rotations $R_1,R_2:S_{g-1}\to S_{g-1}$ given by,
$$R_1(x,y,z)=(-x,-y,z)\text{ \ \ \ \ and \ \ \ \ }R_2(x,y,z)=(x,-y,-z),$$
as defined in \cite[Proof of Lemma 4.1]{HT09}. These homeomorphisms are involutions and commute with the covering transformation $\sigma:S_{g-1}\to S_{g-1}$ of the orientable double cover $\pi : S_{g-1} \to N_g $. Hence, they induce $ f_1, f_2: N_g \to N_g$  and notice that $f_1, f_2 \in \diff(N_g;*)$, where the marked point is $*=[(x_0,0,0)]$. 
For genus $g\geqslant 3$, using the arguments of \cite[Proof of Lemma 4.1]{HT09}, we can see that $f_1 f_2 = f_2 f_1$, $f_1^2 = f_2^2 = id_{N_g}$, $f_1$ and $f_2$ are not isotopic to each other relative to $*$ in $N_g$ and their classes $[f_1],[f_2]\in \modno$ are non-trivial. Thus, $\langle [f_1] , [f_2] \rangle \cong \Z/2\times \Z / 2 \leqslant \modno$ and therefore $\modno$ is not $2$-periodic. 
\end{rmk}

\bigskip


\section{An upper bound for the p-period}
\label{Sec:UpperBound}

Let $p$ be an odd prime. In this section we show that, for $g\geqslant 3$ and $k\geqslant 1$, the $p$-period of the group $\modn$ is bounded above by $4$, by adapting the methods of \cite{lu01per} 
to the non-orientable case. Later on, in Section \ref{Sec:Lower Bound}, we will show that the $p$-period of $\modn$ is greater than or equal to 4 proving that $\per(\modn)=4$. 
By contrast, in the orientable case it was shown in \cite[Theorem 1.7]{lu01per} that 
the $p$-period of $\Gamma_g^k$ is equal to $2$ if $k\geqslant 1$ and $\Gamma_g^k$ contains $p$-torsion. 

We will use the Yagita invariant $Y(\modn, p)$, which can be regarded as a generalization
of the $p$-period if $\modn$ has $p$-torsion. Also, since we have already proven that $\modn$ is $p$-periodic for $g\geqslant 2$, $k\geqslant 1$, the
Yagita invariant $Y(\modn, p)$ coincides with the $p$-period of $\modn$ by \cite[Proposition 4.1.1]{Xia90The}; see also \cite{GMX94Yag} for calculations of the Yagita invariant $Y(\Gamma_g,p)$.

Recall the definition of the Yagita invariant as in \cite[Section 7]{Mis94}. Let $\Gamma$ be a group of finite virtual
cohomological dimension and $\pi \leqslant \Gamma$ any subgroup of prime order $p$. Because $\pi$ injects into any finite quotient of
the form $\Gamma/\Delta$, where $\Delta$ is a torsion-free normal subgroup of finite index in $\Gamma$, the image of the restriction map
in cohomology $H^i(\Gamma ; \Z)  \to  H^i(\pi ; \Z)$ is non-zero to some degree $i>0$.  Reduction mod-$p$ maps
$H^*(\pi ; \Z)$ onto $\mathbb F_p[u]   \subset H^*(\pi ; \mathbb F_p)$
with $u$ a generator of $H^2(\pi ; \mathbb F_p)$. Thus, there exists a maximum value $m = m(\pi, \Gamma)$ such that
$$  \Ima \left( H^*(\Gamma ; \Z)  \to  H^*(\pi ; \Z)  \right)  \subset \mathbb F_p[u^m]   \subset H^*(\pi ; \mathbb F_p) .$$
Moreover, $m(\pi, \Gamma)$ is bounded above by $m(\pi, \Gamma / \Delta)$, where $\Delta$ denotes as before a torsion-free normal subgroup of finite index. Since $\Gamma / \Delta$ is finite, it follows from the comments of \cite[Section 1]{Yag85} that $m(\pi, \Gamma)$ is bounded by a bound depending only on $\Gamma$. 
The {\em Yagita invariant} of $\Gamma$
with respect to the prime $p$ is then defined to be the least common multiple of the values $2 m(\pi, \Gamma)$, where $\pi$ ranges over
all subgroups of order $p$ of $\Gamma$. It is denoted by $Y(\Gamma, p)$.\\

We will proceed by induction on $k$, the number of marked points.
First, we prove that for every $g \geqslant 3$ the $p$-period of $\modno$ is bounded above by $4$ 
if $\modno$ has $p$-torsion. For simplicity of notation, we write $\diff(N_g; *)$ instead of the group $\diff(N_g;1)$, where $*$ will be thought of as the marked point of $N_g.$

The main idea of the proof is based on \cite[Theorem 1.4 and Theorem 1.7]{lu01per} and is outlined below. Given a subgroup $\pi \leqslant  \modno$ of order $p$, one can use Nielsen's realization theorem to obtain a lift $\widetilde \pi$  in $\diff(N_g; *)$. On the other hand, the action of $\diff(N_g; *)$ on $N_g$ induces a representation $\rho  :  \diff(N_g; *)  \to  GL_2(\R)$ given by sending a diffeomorphism $f : N_g  \to N_g$ to its differential in $*$,   $df_*  :  T_* N_g \to T_* N_g$, and which restricts to a faithful representation 
$\widetilde \rho :  \widetilde \pi  \to  GL_2^+(\R)$. The induced map at the level of classifying spaces
$B \widetilde \rho : B \widetilde \pi  \to B GL_2^+(\R)$ satisfies that there exists a class $c_1 \in H^2 (BGL_2^+(\R) ; \Z)$, essentially the first Chern class,
such that $(B \widetilde \rho)^*(c_1) \neq 0$. By diagram chasing we then obtain a class in $H^4(\modno ; \Z)$, related to the first Potryagin class in $H^4(B GL_2(\R) ; \Z)$, which restricts to the non-zero element in $H^4(B \pi ; \Z)$ corresponding to $c_1^2$. Therefore, we obtain
$m(\pi, \modno) \leqslant 2$ and the result for the $p$-period of $\modno$ follows.
Finally, we use  Birman's short exact sequence 
to obtain the result for any $k\geqslant 1$ by an induction argument on $k$. \\

We start by stating a few technical results. The following was proven in \cite[Proof of Theorem 1.4]{lu01per}.

\medskip

\begin{lem}\label{Lemma:Faithful_non_zero} 
	Let $\rho : \pi \to GL_2^+(\R) \simeq SO(2)$ be a faithful representation of a  non-trivial cyclic group $G$. Then there exists $c_1 \in H^2( BGL_2^+(\R);\Z ) \cong H^2(BSO(2) ;\Z)$ such that    $$(B \rho)^* (c_1)\neq 0  \ \  \text{ in }  \ \ H^2(B\pi ;\Z ) .$$
\end{lem}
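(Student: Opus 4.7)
The plan is to exploit the standard identifications $GL_2^+(\R)\simeq SO(2)=U(1)$, hence $BGL_2^+(\R)\simeq BSO(2)\simeq \C P^\infty$, under which $H^2(BSO(2);\Z)\cong\Z$ is generated by the first Chern class $c_1$. Writing $\pi\cong\Z/n$ with $n>1$, the periodic resolution of $\Z/n$ gives $H^2(B\pi;\Z)\cong\Z/n$, so the claim reduces to showing that the generator $c_1$ restricts to a non-zero (in fact, generating) element of $\Z/n$ along $B\rho$.

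To carry this out, I would first observe that a homomorphism $\rho:\pi\to SO(2)=U(1)$ is precisely a complex character of $\pi$, and by naturality of Chern classes the pullback $(B\rho)^*(c_1)$ equals the first Chern class of the associated complex line bundle $E\pi\times_\rho\C\to B\pi$. The classical isomorphism
\[
\Hom(\pi,U(1))\;\xrightarrow{\;\sim\;}\;H^2(B\pi;\Z),
\]
sending a character to the first Chern class of its associated line bundle, can be recognized as the Bockstein applied to the mod-$n$ reduction of the character viewed as an element of $H^1(B\pi;\Z/n)\cong \Hom(\pi,\Z/n)$. Under this identification, a \emph{faithful} character corresponds precisely to a generator of $\Z/n$, and hence $(B\rho)^*(c_1)\neq 0$.

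Alternatively, one can argue geometrically using the short exact sequence of topological groups
\[
1\longrightarrow \pi\longrightarrow S^1\xrightarrow{\;z\mapsto z^n\;} S^1\longrightarrow 1,
\]
which realizes $B\rho:B\pi\to BS^1$ (up to homotopy) as the homotopy fiber of the degree-$n$ self-map $\C P^\infty\to\C P^\infty$. The latter acts on $H^2(-;\Z)$ by multiplication by $n$, and the corresponding Serre spectral sequence (or the Gysin long exact sequence for the principal $S^1$-bundle) identifies $(B\rho)^*(c_1)$ with a generator of $H^2(B\pi;\Z)\cong\Z/n$. The only delicate point is matching the generator of $H^2(BSO(2);\Z)$ coming from the homotopy equivalence $BSO(2)\simeq BU(1)$ with the first Chern class used above; this is standard and involves no real obstacle since $\pi$ is finite cyclic and the calculation is classical.
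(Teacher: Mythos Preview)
Your argument is correct. The identification $BGL_2^+(\R)\simeq BSO(2)\simeq BU(1)$, together with the classical isomorphism $\Hom(\pi,U(1))\xrightarrow{\sim}H^2(B\pi;\Z)$ sending a character to the first Chern class of its associated line bundle, immediately gives that a faithful character of $\pi\cong\Z/n$ restricts $c_1$ to a generator of $\Z/n$. The only small point you leave implicit is that an arbitrary faithful $\rho:\pi\to GL_2^+(\R)$ need not land in $SO(2)$ on the nose; but any finite subgroup of $GL_2^+(\R)$ is conjugate into $SO(2)$ (average an inner product), and conjugation does not change the homotopy class of $B\rho$, so this is harmless.

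As for comparison with the paper: the paper does not actually supply its own proof of this lemma. It simply records that the statement was proven in \cite[Proof of Theorem 1.4]{lu01per} and moves on. Your Chern-class argument is precisely the standard one and is, in essence, what Lu does in that reference, so there is no substantive divergence to discuss.
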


\bigskip

\begin{lem}\label{Lemma:inclusion_H4}
	There exists a non-zero element $p_4 \in H^4(BO(2);\Z )$ such that $$(B\iota)^* (p_4)={c_1}^2  \ \  \text{ in }  \ \ H^4(BSO(2);\Z ) , $$ where $\iota$ is the canonical inclusion $SO(2) \hookrightarrow O(2) $.
\end{lem}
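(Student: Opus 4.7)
The plan is to take $p_4$ to be the first Pontryagin class $p_1 \in H^4(BO(2);\Z)$ of the universal rank $2$ real vector bundle $\gamma_2 \to BO(2)$, and then verify both required properties by pulling back to $BSO(2)$.

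First, I would identify $BSO(2)$ with $BU(1) = \C P^\infty$ and recall that under this identification the pullback $(B\iota)^*\gamma_2$ is naturally isomorphic, as a real rank $2$ bundle, to the underlying real bundle $L_\R$ of the tautological complex line bundle $L \to \C P^\infty$. Thus $(B\iota)^*(p_1) = p_1(L_\R)$, and since the first Pontryagin class of a real bundle $E$ is defined by $p_1(E) = -c_2(E \otimes_\R \C)$, the problem reduces to a Chern class calculation on $L \otimes_\R \C$.

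Next, I would use the standard complex splitting $L \otimes_\R \C \cong L \oplus \overline{L}$ to compute
\[
c_2(L \otimes_\R \C) = c_1(L)\cdot c_1(\overline{L}) = c_1(L)\cdot (-c_1(L)) = -c_1(L)^2,
\]
which gives $(B\iota)^*(p_1) = c_1(L)^2 = c_1^2$, where $c_1 \in H^2(BSO(2); \Z)$ is the generator. Non-vanishing of $p_1 = p_4$ in $H^4(BO(2); \Z)$ is then immediate, because its restriction $c_1^2$ is non-zero in $H^4(BSO(2); \Z) = \Z[c_1]$.

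There is no real obstacle here: the entire argument is a direct application of the naturality of Pontryagin classes together with the standard identity $p_1 = c_1^2 - 2c_2$ for complexified oriented rank $2$ bundles (which in the rank $2$ case, viewed as a complex line bundle, gives $c_2 = 0$ for the underlying complex line bundle). The only point to be careful about is the sign convention in the definition $p_1(E) = -c_2(E \otimes_\R \C)$ and the corresponding sign in $c_1(\overline{L}) = -c_1(L)$, which cancel to produce the desired equality $(B\iota)^*(p_1) = c_1^2$.
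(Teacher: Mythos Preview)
Your proof is correct and takes a genuinely different route from the paper's. The paper factors $\iota$ through $SO(3)$ via the representation $\rho:O(2)\to SO(3)$, $A\mapsto\operatorname{diag}(A,\det A)$, then invokes the known ring $H^*(BSO(3);\Z)\cong\Z[v_3,p_4']/\langle 2v_3\rangle$ and a Serre spectral sequence argument for the fibration $S^2\to BSO(2)\to BSO(3)$ to see that $(Bi)^*:H^4(BSO(3);\Z)\to H^4(BSO(2);\Z)$ is an isomorphism; the desired class is then pulled back from $BSO(3)$. Your argument is more direct and more informative: you identify the class explicitly as the first Pontryagin class $p_1$ of the tautological bundle $\gamma_2\to BO(2)$ and compute its restriction via the standard identity $p_1(L_\R)=c_1(L)^2$ for a complex line bundle. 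This avoids the detour through $BSO(3)$ and the spectral sequence entirely, at the cost of relying on naturality of characteristic classes and the splitting $L\otimes_\R\C\cong L\oplus\overline{L}$. Both are valid; yours is shorter and names the class concretely.
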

\begin{proof}
    Consider the following commutative triangle
 $$ \xymatrix@C=2cm @M=2mm {
	SO(2) \ar[d]_-{\iota} \ar[rd]^-{i}  &   \\
	O(2) \ar[r]^-{\rho} & SO(3)   
	} $$
 where $\rho:O(2) \to SO(3)$ is the homomorphism given by 
$$\rho (A) = \left( \begin{matrix}
A & 0 \\ 
0 & det(A)
\end{matrix} \right)  .$$ 
and $i:SO(2)\to SO(3)$ is the natural inclusion. 
Consider the induced map $Bi: BSO(2) \to BSO(3)$ of classifying spaces, this map is a fibration with fiber $S^2$. Recall from \cite[Theorem 1.5]{Brownj82}, \cite[Theorem 1]{Fesh83} that the integral cohomology of $BSO(3)$ is given as a graded algebra by
$$H^*\left( BSO\left( 3\right) ;\mathbb{Z} \right) \, \cong \, \Z [ v_3, p' _4 ]  \, \big /  \, \langle 2v_{3} \rangle ,$$
where the subscripts indicate the degree of each generator. On the other hand, the cohomology of $BSO(2)$ is given by 
$$H^*(BSO(2) ; \Z) = H^*(BU(1); \Z) = \Z[c_1],$$
where $c_1$ denotes the first Chern class. By a straightforward analysis of the Serre spectral sequence, we can see that the induced homomorphism in $H^4(-;\Z)$
$$(Bi)^* : H^4 (BSO(3) ; \Z ) \to H^4 (BSO(2) ;\Z)$$ 
is an isomorphism. Thus, there exists $p_4 '' \in H^4(BSO(3);\Z)$ such that 
$(Bi)^*(p_4'')={c_1}^2$.
Therefore, $p_4:=(B \rho) ^* (p_4 '' )  \in H^4( B O(2) ; \Z )$ is the desired cohomology class.
\end{proof}

We now proceed to prove our main result of the section.

\medskip

\begin{thm}[{\bf Upper bound for the $p$-period of  $\mathcal{N}_{g}^{1}$}]\label{Thm:UpperBound OnePoint}
	Let $g\geqslant 3$, and $p$ be an odd prime. If $\modno$ contains $p$-torsion, then the $p$-period of $\modno$ is lower or equal than $4$, i.e., $\per({\modno})\leqslant 4 $.
\end{thm}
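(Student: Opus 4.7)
The plan is to show that the Yagita invariant $Y(\modno, p)$ is at most $4$; since $\modno$ is already $p$-periodic by Theorem \ref{Thm:Main 1 pperiodicity}, \cite[Proposition 4.1.1]{Xia90The} then gives $\per(\modno) = Y(\modno, p) \leqslant 4$. It suffices to produce, for each subgroup $\pi \leqslant \modno$ of order $p$, a class in $H^4(\modno;\Z)$ whose restriction to $H^4(\pi;\Z)$ is nonzero and whose mod-$p$ reduction lies in the subring $\mathbb{F}_p[u^2] \subset H^*(\pi;\mathbb{F}_p)$. By the Nielsen realization theorem for non-orientable surfaces \cite[Theorem 5.2]{Col_Xi22} there is a lift $\widetilde\pi \leqslant \diff(N_g; *)$ mapping isomorphically onto $\pi$; the differential at the marked point then yields a representation $\widetilde\rho : \widetilde\pi \to GL_2(\R)$, $f \mapsto df_*$. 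This representation is faithful by Bochner linearization, and because $p$ is odd and $(df_*)^p = I$ forces $\det(df_*) = 1$, it factors through $GL_2^+(\R) \simeq SO(2)$.

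Next, the global differential construction produces a representation $\rho : \diff(N_g; *) \to GL_2(\R) \simeq O(2)$ with $\rho|_{\widetilde\pi} = \iota \circ \widetilde\rho$, where $\iota : SO(2) \hookrightarrow O(2)$. For $g \geqslant 3$ the identity component of $\diff(N_g; *)$ is contractible (the non-orientable analogue of Earle--Eells), so $B\diff(N_g; *) \simeq B\modno$, and the class $p_4 \in H^4(BO(2);\Z)$ provided by Lemma \ref{Lemma:inclusion_H4} pulls back along $B\rho$ to a class $\alpha \in H^4(\modno;\Z)$. Its restriction to $H^4(\pi;\Z) \cong H^4(B\widetilde\pi;\Z)$ equals $(B\widetilde\rho)^*(B\iota)^*(p_4) = (B\widetilde\rho)^*(c_1^2) = \bigl((B\widetilde\rho)^*(c_1)\bigr)^2$, which is nonzero by Lemma \ref{Lemma:Faithful_non_zero}. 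Reducing mod $p$ places this class in $\mathbb{F}_p[u^2]$, so $m(\pi, \modno) \leqslant 2$ for every such $\pi$ and hence $Y(\modno, p) \leqslant 4$.

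The step I expect to require the most care is the passage from $H^*(B\diff(N_g; *);\Z)$ to $H^*(\modno;\Z)$: the diagram chase needs the contractibility of the identity component of $\diff(N_g; *)$ for $g \geqslant 3$, together with a careful check that under the resulting homotopy equivalence $B\diff(N_g; *) \simeq B\modno$ the inclusion $B\widetilde\pi \hookrightarrow B\diff(N_g; *)$ matches $B\pi \hookrightarrow B\modno$, so that the restriction of $\alpha$ really is the square of $(B\widetilde\rho)^*(c_1)$ on $B\pi$. The faithfulness of $\widetilde\rho$ via Bochner linearization is standard but must also be verified in the non-orientable smooth setting.
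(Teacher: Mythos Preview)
Your proposal is correct and follows essentially the same argument as the paper: lift $\pi$ via Nielsen realization, use the tangent representation at the marked point (faithful, landing in $GL_2^+(\R)$ since $p$ is odd), pull back $p_4$ from Lemma~\ref{Lemma:inclusion_H4} through $B\diff(N_g;*)\simeq B\modno$, and apply Lemma~\ref{Lemma:Faithful_non_zero} to see the restriction to $\pi$ is $c_1^2\neq 0$, giving $m(\pi,\modno)\leqslant 2$. The only cosmetic differences are that the paper justifies faithfulness of $\widetilde\rho$ via a dianalytic (Klein surface) structure rather than Bochner linearization, and cites Gramain \cite{Gram73} for the contractibility of the identity component of $\diff(N_g;*)$ when $g\geqslant 3$.
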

\begin{proof}
We will show that the Yagita invariant $Y(\modno,p)\leqslant 4$. Let $\pi\leqslant \modno$ be a subgroup of order $p$. By the Nielsen realization theorem for non-orientable surfaces (see \cite[Theorem 5.2]{Col_Xi22}), there exists a subgroup $\widetilde{\pi} \leqslant \diff(N_g;*)$ such that $\widetilde{\pi}\cong \pi$.
Thus there is a commutative diagram
$$ \xymatrix@C=1.5cm @M=2mm {
\widetilde \pi \ar@{^{(}->}[r]^-{\widetilde i} \ar[d]_-{\cong} & \diff(N_g; *) \ar[d] \\
\pi \ar@{^{(}->}[r]^-i & \modno.  
} $$

Hence, $\widetilde \pi$ acts on the surface $N_g$ as a group of diffeomorphisms that fix $*$. We obtain the following representation 
which arises from letting a diffeomorphism of $(N_g, *)$ act on $T_* N_g$, the unoriented tangent space of $N_g$ at $*$
$$ \rho: \diff (N_g, * ) \longrightarrow GL_2(\R), \qquad  f \longmapsto df_*.$$ 

Consider the representation of $\widetilde \pi$ given by the composition
$$\rho \circ \widetilde i: \widetilde{\pi} \overset{\widetilde i}{\longrightarrow} \diff(N_g; *) \overset{\rho}{\longrightarrow} GL_2 (\R).$$ 
Since $\widetilde \pi$ is a cyclic group of odd order $p$, we have $\Ima ( \ \rho \circ \widetilde i \ ) \subset GL_2^+ (\R)$. 
Denote by $\widetilde{\rho} : \widetilde{\pi} \to GL_2^+(\R)$ the resulting representation by restricting the image. 
Being $N_g$ non-orientable, it can be given a dianalytic structure of a Klein surface
in which $\widetilde \pi$ acts as a group of rotations on a neighborhood of $*$ with respect to this
structure. Thus, $\widetilde \rho$ is a faithful representation. By Lemma \ref{Lemma:Faithful_non_zero} there exists a class $c_1 \in H^2( BGL_2^+ ; \Z)$ such that
\begin{equation}\label{Eqn:Thm-Per1}
	v:= (B\widetilde{\rho})^*(c_1) \neq 0   \quad \text{in}  \quad H^2(\widetilde{\pi} ; \Z).
\end{equation}

This information can be summarized in the following commutative diagram.

$$
\xymatrix{
GL_2^+(\R) \ar[r]^-{\iota}  & GL_2(\R)   & df_* :  T_* N_g \to T_* N_g   \quad\\
\widetilde\pi  \ar[d]_-{\cong}  
\ar@{^(->}[r]<-0.5ex>^-{\widetilde i}
\ar[u]^{\widetilde \rho}^{\text{
\begin{minipage}{2cm}  
Faithful \\ \hspace*{0.15cm} rep.
\end{minipage}
}} &  \diff(N_g; *)  \ar[d]  \ar[u]_\rho \ar[d]&  f \ar@{|->}[u]<9ex> \qquad  \qquad  \qquad   \qquad \;\;\\
\pi \ar@{^(->}[r]<-0.5ex>^{i} &  \modno  &
}
$$


Notice that the natural inclusions induce a homotopy commutative diagram at the level of classifying spaces 
$$
\xymatrix{
B SO(2)    \ar[r]^{B \iota}  \ar[d]_\simeq  &  B O(2) \ar[d]^\simeq \\
BGL_2^+(\R)  \ar[r]^{B \iota} &   B GL_2(\R)  
}
$$
where the vertical maps are homotopy equivalences. 
Thus, by the Lemma \ref{Lemma:inclusion_H4} there exists a class
$p_4 \in H^4 ( BGL_2 ( \R);\Z)$ such that
\begin{equation}\label{Eqn:Thm-Per2}
		(B \iota) ^*(p_4) = {c_1}^2 .
\end{equation}	
Passing to cohomology of classifying spaces

\bigskip

$$
\xymatrix @C=2mm{
 c_1 \ar@{|->}[d] & \overset{\text{{\small ${c_1}^2$}}}{H^* BGL_2^+(\R)}  \ar[d] & &  
H^* BGL_2(\R) \ar[ll]_-{(B\iota)^*}  \ar[d]  &  {p_4}
\ar@/_2pc/[lll]  
\ar@/^1pc/[dd]\\
 v \neq 0 & H^*(B \widetilde \pi) &  &  H^*B \diff(N_g;*) \ar[ll]_-{(B\widetilde i \,)^*}&  \\
& \underset{\text{ \small  {$0 \neq\bullet$}}}{H^*(B \pi)}  \ar[u]^{\cong} & & H^*(B \modno) \ar[ll]_-{(Bi)^*}  \ar[u]_-{\cong} &  
{\bullet}
\ar@/^2pc/[lll]\\
}
$$


\vspace{8mm}

Since the identity component of the group $\diff (N_g;*)$ for $g\geqslant 3$ is contractible by \cite[Prop. 2 and Thm. 2]{Gram73}, it follows that the induced map in classifying spaces $B\diff (N_g;*) \to B\modno $ is a homotopy equivalence.
Thus, the vertical right arrow in the bottom square is an isomorphism. This argument exhibits a class in $H^4(B\modno ; \Z)=H^4(\modno ; \Z)$ (namely the image of $p_4$) that maps 
to a non-zero class under the restriction
$$ H^*(\Gamma ; \Z)     \to  H^*(\pi ; \Z)   \xrightarrow{ \,{\rm mod} \, p \,}     \mathbb F_p[u].  $$
Therefore, $m(\pi ;  \modno) \leqslant 2$ and thus $\per(\modno) \leqslant 4$. 
\end{proof}

Finally, we use the Birman exact sequence (see \cite[Proposition 1 and Lemma 1]{Gram73} and \cite[Theorem 2.1]{Kork02})
to generalize the previous result from $\modno$ to $\modn$  for any $k \geqslant 1$ and $g \geqslant 3$.

\medskip

\begin{thm}[{\bf Upper bound for the $p$-period of  $\mathcal{N}_{g}^{k}$}]\label{Thm:UpperBound} Let $k \geqslant 1$, $g \geqslant 3$  and  $p$ an odd prime. If the group $\modn$ contains $p$-torsion, then the $p$-period of $\modn$ is $\per(\modn)\leqslant 4$.
\end{thm}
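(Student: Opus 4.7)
The plan is to induct on $k\geqslant 1$, with base case $k=1$ given by Theorem \ref{Thm:UpperBound OnePoint}. For the inductive step, I would invoke the Birman exact sequence in the non-orientable setting,
\[
1 \longrightarrow \pi_1\!\bigl(N_g \setminus \{z_1, \dots, z_k\},\, z_{k+1}\bigr) \longrightarrow \mathcal{N}_g^{k+1} \xrightarrow{\ q\ } \mathcal{N}_g^k \longrightarrow 1,
\]
in which the surjection $q$ is induced by forgetting the last marked point $z_{k+1}$. The crucial feature is that the kernel is the fundamental group of a non-orientable surface with at least one puncture, which is a free group and therefore contains no $p$-torsion.

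As a consequence, any cyclic subgroup $\pi\leqslant\mathcal{N}_g^{k+1}$ of order $p$ is mapped isomorphically by $q$ onto a subgroup $\pi':=q(\pi)\leqslant \modn$ of order $p$. In particular $\modn$ has $p$-torsion, so by the inductive hypothesis $\per(\modn)\leqslant 4$, and by Theorem \ref{Thm:Main 1 pperiodicity} together with \cite[Proposition 4.1.1]{Xia90The} the Yagita invariant $Y(\modn,p)$ coincides with this $p$-period. Thus $m(\pi',\modn)\leqslant 2$, so there exists a class $\alpha\in H^4(\modn;\Z)$ whose mod-$p$ reduction restricts to a non-zero multiple of $u^2$ in $H^4(\pi';\mathbb{F}_p)=\mathbb{F}_p[u]_{4}$, with $u$ a generator of $H^2(\pi';\mathbb{F}_p)$.

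The final step is diagram chasing in the commutative square
\[
\xymatrix @C=1.5cm @M=2mm{
H^*(\modn;\Z) \ar[r] \ar[d]_-{q^*} & H^*(\pi';\Z) \ar[d]^-{\cong} \\
H^*(\mathcal{N}_g^{k+1};\Z) \ar[r] & H^*(\pi;\Z)
}
\]
whose horizontal maps are restrictions to the subgroups and whose right vertical isomorphism is induced by the restriction of $q$ to $\pi$. The pulled-back class $q^*(\alpha)\in H^4(\mathcal{N}_g^{k+1};\Z)$ then restricts to a non-zero element (mod $p$) of $H^4(\pi;\Z)$, so $m(\pi,\mathcal{N}_g^{k+1})\leqslant 2$. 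Since $\pi$ was arbitrary, $Y(\mathcal{N}_g^{k+1},p)\leqslant 4$; invoking Theorem \ref{Thm:Main 1 pperiodicity} and \cite[Proposition 4.1.1]{Xia90The} once more completes the induction.

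The main delicate point in this argument is to confirm that the Birman sequence is available with the stated form in the non-orientable setting and that its kernel is torsion-free; both are covered by the cited results of Gramain and Korkmaz. Once these are in place the induction is essentially formal, since every ingredient (the Yagita invariant, the passage from $\per$ to $Y(-,p)$ via Theorem \ref{Thm:Main 1 pperiodicity}, and the naturality of restriction in group cohomology) has already been set up.
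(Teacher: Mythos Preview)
Your overall strategy---induction on $k$ via the Birman exact sequence, with base case $k=1$ from Theorem~\ref{Thm:UpperBound OnePoint}---matches the paper's proof exactly. The difference is in how the inductive step is executed: the paper simply invokes \cite[Lemma~1.1]{lu01per}, which states that in a short exact sequence $1\to N\to G\to Q\to 1$ with $\cd(N)$ and $\vcd(Q)$ finite and $Q$ $p$-periodic, one has $\per(G)\mid\per(Q)$. You instead rerun the Yagita argument by hand, which is a reasonable alternative.

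There is, however, a small gap in your execution. From $m(\pi',\modn)\leqslant 2$ you claim the existence of a class $\alpha\in H^4(\modn;\Z)$ restricting to a nonzero multiple of $u^2$. This does not follow from the definition of $m(\pi',\Gamma)$: the image of restriction is a subring of $\mathbb{F}_p[u]$ whose set of occurring exponents has greatest common divisor $m(\pi',\Gamma)$, but $m(\pi',\Gamma)$ itself need not lie in that set (e.g.\ the subring generated by $u^4$ and $u^6$ has $m=2$ yet contains no $u^2$). Two easy fixes are available. First, you can bypass the degree-$4$ class entirely: your commutative square already shows that the image of $H^*(\mathcal{N}_g^{k+1};\Z)\to H^*(\pi;\Z)$ contains the image of $H^*(\modn;\Z)\to H^*(\pi';\Z)$ under the identification $\pi\cong\pi'$, and a larger image forces a smaller (or equal) value of $m$, so $m(\pi,\mathcal{N}_g^{k+1})\leqslant m(\pi',\modn)\leqslant 2$ directly. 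Second, you could strengthen the inductive hypothesis to ``there exists a class in $H^4(\modn;\Z)$ restricting nontrivially to every order-$p$ subgroup'': the base case $k=1$ is exactly what the proof of Theorem~\ref{Thm:UpperBound OnePoint} establishes (via the class $p_4$), and the inductive step is then the pullback $q^*$ you already wrote down. Either repair makes your argument complete; the paper's use of Lu's lemma is just the packaged version of the same idea.
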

\begin{proof}
The proof is by induction on the number of marked points. The case $k=1$ is precisely Theorem \ref{Thm:UpperBound OnePoint}. Assume that the result holds for the case of $k\geqslant 1$ and suppose that
$\mathcal{N}_g^{k+1}$ has $p$-torsion. Since $g \geqslant 3$ we can consider the Birman exact sequence
$$1 \to \pi_1 (N_g^{k}) \to \mathcal{N}_g^{k+1} \to \modn \to 1,$$
where $N_g^{k}$ denotes the surface obtained from $N_g$ by removing the $k$ marked points. Since $\pi_1 (N_g^{k})$ is a free group, it follows that $\modn$ must contain $p$-torsion. Thus, from Theorem \ref{Thm:Main 1 pperiodicity} we see that $\modn$ has $p$-periodic cohomology and by the induction hypothesis we have $\per(\modn)\leqslant 4$. Since $\vcd(\modn)$ and $\cd(\pi_1 (N_g^{k}))$ are finite, it follows from \cite[Lemma 1.1]{lu01per} that $\mathcal{N}_g^{k+1}$ has $p$-periodic cohomology and $\per(\mathcal{N}_g^{k+1})|\per(\modn)$. Consequently, $\per(\mathcal{N}_g^{k+1}) \leqslant 4$.
\end{proof}

\medskip


\section{A lower bound for the p-period}
\label{Sec:Lower Bound}

In this part, we find a lower bound for the $p$-period of $\modn$. Our approach uses the following result, which can be deduced by
\cite[Lemma 3.1]{GMX92} and the Brown decomposition Theorem \cite[Corollary X.7.4]{Brown82Coh}.

\medskip

\begin{thm} \label{Thm:GMX pperiod}
Suppose $\Gamma$ has finite $\vcd$ and $p$-periodic cohomology. Moreover, assume that $\Gamma$ contains only finitely many conjugacy classes of subgroups of order $p$. Then the $p$-period of $\Gamma$ is given by 
$$\per(\Gamma)=2\cdot {\lcm} \{ [N_\Gamma(\Zp) : C_\Gamma(\Zp) ] \mid \Zp\in S \} \cdot p^d $$
for some integer $d\geqslant 0$, where $S$ is a set of representatives of the conjugacy classes of subgroups of $\Gamma$ of order $p$, $N_\Gamma(\Zp) $ and $C_\Gamma (\Zp)$ are the normalizer and centralizer of $\Zp$ in $\Gamma$ respectively. 
\end{thm}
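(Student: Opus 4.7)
The plan is to combine Brown's decomposition of the $p$-primary Farrell cohomology with the local computation of the $p$-period at each cyclic subgroup of order $p$, exactly as the two citations in the statement suggest.

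First, I would invoke Brown's decomposition theorem \cite[Corollary X.7.4]{Brown82Coh}. Since $\Gamma$ has $p$-periodic cohomology, every nontrivial elementary abelian $p$-subgroup of $\Gamma$ is necessarily cyclic (see \cite[Theorem X.6.7]{Brown82Coh}), so Brown's decomposition furnishes a natural splitting of $\widehat{H}^*(\Gamma;M)_{(p)}$ as a direct sum indexed by the conjugacy classes of subgroups of order $p$, the $\Zp$-summand being a natural piece of $\widehat{H}^*(N_\Gamma(\Zp);M)_{(p)}$. The sum is finite by hypothesis, and since the decomposition is natural in $M$, the $p$-period of $\Gamma$ is the least common multiple of the $p$-periods of the individual summands.

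Next, I would compute the $p$-period of the summand attached to a fixed $P=\Zp\in S$. Inside $N_\Gamma(P)$ the group $P$ is normal, and the quotient $W_P:=N_\Gamma(P)/C_\Gamma(P)$ embeds faithfully into $\Aut(P)\cong(\Z/p)^*$, a cyclic group of order $p-1$, in particular of order prime to $p$. The natural action of $W_P$ on $H^2(P;\Z)\cong\Zp$ is multiplication by the generator of this embedded subgroup, so the smallest $n>0$ for which the induced action on $H^{2n}(P;\Z)$ is trivial is exactly $n=|W_P|=[N_\Gamma(P):C_\Gamma(P)]$. Combined with a restriction/Lyndon--Hochschild--Serre argument applied to
$$1\to C_\Gamma(P)\to N_\Gamma(P)\to W_P\to 1$$
and the $p$-periodicity of $C_\Gamma(P)$, this is exactly the content of \cite[Lemma 3.1]{GMX92}, which yields that the $p$-period of the $P$-summand equals $2\cdot [N_\Gamma(P):C_\Gamma(P)]\cdot p^{d_P}$ for some integer $d_P\geqslant 0$.

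Finally, taking the least common multiple over the finite set $S$ produces the desired formula with $d=\max_{P\in S} d_P$. The crux of the argument, and the reason the two cited results are genuinely needed rather than a one-line computation, is twofold: (i) justifying via Brown's decomposition that the global $p$-period problem really does reduce to independent local problems on each $N_\Gamma(P)$, and (ii) pinning down via \cite[Lemma 3.1]{GMX92} that each local period is controlled precisely by the index $[N_\Gamma(P):C_\Gamma(P)]$ up to a power of $p$, which in turn rests on the faithful action of $W_P$ on $H^*(P;\Z)$. I expect (ii) to be the more delicate step, as the $p^{d_P}$ factor cannot be pinned down further without additional hypotheses on the centralizer.
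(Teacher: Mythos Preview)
Your proposal is correct and follows exactly the route the paper indicates: the paper does not supply its own proof of this theorem but simply states that it ``can be deduced by \cite[Lemma 3.1]{GMX92} and the Brown decomposition Theorem \cite[Corollary X.7.4]{Brown82Coh}'', and your argument is precisely the natural way to combine those two ingredients. One minor remark: your identification $d=\max_{P\in S} d_P$ is justified because each index $[N_\Gamma(P):C_\Gamma(P)]$ divides $p-1$ and is therefore coprime to $p$, so the $p$-part and the prime-to-$p$ part of the lcm separate cleanly.
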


\bigskip

To apply the above result, first observe that if $\Zp = \langle \alpha \rangle$, then for each $ \beta\in N_{\Gamma}(\Zp) $  
there exists $0< m_{\beta} < p $ such that $ \beta \alpha \beta^{-1} = \alpha^{m_{\beta}} $. This allows us to define a bijection 
\begin{align*}
	N_{\Gamma} ( \Zp )/C_{\Gamma} ( \Zp ) & \to  \{ m \in \{1,\ldots,p-1\} \mid \alpha^m \text{ is conjugate to } \alpha \}  \\
	[\beta] & \mapsto m_\beta. 
\end{align*}

Therefore, if we determine all the powers to which one of the possible generators of the subgroup $\Zp$ is conjugated, we can obtain information about the $p$-period of the group $\Gamma$. 
According to the preceding results, in the case of $\modn$ where $k\geqslant 1$ and the group has $p$-torsion, it suffices to find at least one nontrivial power $\alpha^m$ such that $\alpha$ and $\alpha^m$ are conjugated in $\modn$ to conclude that $\per(\modn)=4$.  In the case $k=0$, this nontrivial power gives us a lower bound for the $p$-period $\per(\mathcal{N}_g)\geqslant 4$,  for the genera $g\geqslant 3$ when the group $\mathcal{N}_g$ has $p$-periodic Farrell cohomology. 
However, this power is difficult to find directly in the mapping class group $\modn$.  Fortunately, the Nielsen realization theorem allows us to work with elements in the group $\diffnk$ rather than mapping classes in $\modn$. \\

Let us denote by $\homek$  the group of homeomorphisms of $N_g$ that fix the marked points pointwise. The proof of the following result is postponed to the end of this section, since new tools need to be introduced.

\medskip

\begin{thm}\label{Thm:f y f-1 conjugados}
    Let $g\geqslant 3$ and $k\geqslant 0$. If $f\in \diffnk$ is of order $p$, then $f$ and $f^{-1}$ are conjugated in $\homek$.
\end{thm}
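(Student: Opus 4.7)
The plan is to pass to the orientable double cover, match the Nielsen fixed-point data of $\widetilde{f}$ and $\widetilde{f}^{-1}$ there, and then upgrade the resulting conjugacy to a $\sigma$-equivariant statement using the theory of NEC groups.

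First, I would set up the double cover. Let $\pi\colon S_{g-1}\to N_g$ be the orientable double cover with orientation-reversing deck involution $\sigma$, and let $\widetilde{f}\colon S_{g-1}\to S_{g-1}$ denote the orientation-preserving lift of $f$. Since $p$ is odd, $\widetilde{f}^{p}$ is orientation-preserving and covers $\mathrm{id}_{N_g}$, so it equals the identity; hence $\widetilde{f}$ has order exactly $p$ and commutes with $\sigma$. Because $\sigma$ is fixed-point-free and $\widetilde{f}$ has odd order, the preimage of each fixed point $y$ of $f$ consists of a $\sigma$-pair $\{\widetilde{y},\sigma(\widetilde{y})\}$, both points fixed by $\widetilde{f}$; in particular this covers the preimages of the $k$ marked points.

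Next I would compare the Nielsen fixed-point data of $\widetilde{f}$ and $\widetilde{f}^{-1}$ on $S_{g-1}$. At each fixed point $\widetilde{y}$, the differential $d\widetilde{f}_{\widetilde{y}}$ is a rotation of the oriented tangent space with a well-defined rotation number $m_{\widetilde{y}}\in(\Z/p)^{\times}$. Differentiating $\sigma\widetilde{f}=\widetilde{f}\sigma$ at $\widetilde{y}$ and using that $d\sigma_{\widetilde{y}}$ is orientation-reversing, conjugation by $d\sigma_{\widetilde{y}}$ sends a rotation to its inverse, so $m_{\sigma(\widetilde{y})}\equiv -m_{\widetilde{y}}\pmod{p}$. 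The multiset of rotation numbers of $\widetilde{f}$ therefore splits into $\sigma$-paired inverse pairs $\{m,-m\}$, and passing to $\widetilde{f}^{-1}$ (which negates every entry) leaves this multiset unchanged. By Nielsen's classification of finite-order orientation-preserving homeomorphisms of a closed orientable surface \cite{Nielsen37}, $\widetilde{f}$ and $\widetilde{f}^{-1}$ are conjugate by some orientation-preserving homeomorphism of $S_{g-1}$.

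The main obstacle is that the conjugating homeomorphism produced by Nielsen's theorem need not commute with $\sigma$, so a priori it need not descend to $N_g$. To enforce $\sigma$-equivariance, I would recast the problem in the NEC-group framework of \cite[Section~3]{Bujalance15}: realize $N_g\cong\mathbb{H}^2/\Gamma$ with $\Gamma$ a NEC group (which necessarily contains orientation-reversing elements), lift $\langle f\rangle$ to an extension $\Gamma\lhd\Gamma'$ with $\Gamma'/\Gamma\cong\Z/p$, and identify $Y:=N_g/\langle f\rangle\cong\mathbb{H}^2/\Gamma'$ as a $2$-orbifold which must be non-orientable (else the branched cover $N_g\to Y$ would force $N_g$ to be orientable). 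The desired $h$ then corresponds to an outer automorphism of $\Gamma'$ that preserves $\Gamma$ setwise and induces $-1$ on $\Gamma'/\Gamma$; equivalently, to an orbifold involution $\overline{h}$ of $Y$ whose lift to $N_g$ reverses the local rotation of $f$ at every cone point. One constructs $\overline{h}$ by assembling local reflections at the cone points of $Y$ and patching them globally using an orientation-reversing generator of $\Gamma$ — this is precisely where the non-orientability of $N_g$ is essential. Since the $k$ marked points are fixed by $f$, they lie among the ramification points of $N_g\to Y$, so $\overline{h}$ can be chosen to fix the corresponding cone points individually, ensuring that its lift lies in $\homek$.
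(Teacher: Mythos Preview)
Your detour through the orientable double cover and Nielsen's rotation data is correct but, as you yourself note, does not by itself produce a $\sigma$-equivariant conjugator; in the end you abandon it and pass to the NEC-group framework, which is exactly what the paper does. So the substantive content of your argument is the last paragraph, and there the key step is only asserted, not carried out. You need an automorphism $\psi$ of the NEC group $\Gamma'$ uniformizing $Y=N_g/\langle f\rangle$ that (i) preserves the surface kernel $\Gamma$, (ii) induces $-1$ on $\Gamma'/\Gamma\cong\Z/p$, and (iii) sends each elliptic generator $x_i$ to a $\Gamma'$-conjugate of $x_i^{-1}$ rather than of some $x_j^{-1}$ with $j\neq i$ --- this last condition is what forces the induced $s$ to fix each branch point of $N_g\to Y$ individually and hence to lie in $\homek$. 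Your sentence ``assembling local reflections at the cone points and patching them globally using an orientation-reversing generator'' gestures at the right idea, but the existence of such a $\psi$ is not obvious and does not follow from general principles: the canonical relation $x_1\cdots x_t\,d_1^2\cdots d_h^2=1$ obstructs the naive assignment $x_i\mapsto x_i^{-1}$, and one needs an explicit formula that exploits a glide reflection to absorb the defect.

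The paper supplies exactly this in its Lemma~4.5: with $\eta=x_1\cdots x_t d_1$ and $\chi_i=x_{i+1}\cdots x_t$, one sets $\psi(x_i)=\eta\chi_i x_i^{-1}\chi_i^{-1}\eta^{-1}$, $\psi(d_j)=\eta\delta_j d_j^{-1}\delta_j^{-1}\eta^{-1}$ for $j\geqslant 2$, and $\psi(d_1)=\eta^2\delta_1\eta^{-1}$; a direct check shows this respects the relation, inverts $\theta$, and by construction sends the fixed point $\zeta_i$ of $x_i$ to a point in its own $\Gamma'$-orbit. Macbeath's realization theorem then yields the homeomorphism $s$. Two minor remarks: there is no reason to expect (and the paper does not claim) that the resulting $\overline{h}$ is an involution, so drop that word; and condition~(iii) above is what does the work for the marked points, so ``can be chosen to fix the cone points'' needs an actual construction, not a passive assertion.
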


\medskip

\begin{rmk}
Notice that in the case where the surface is orientable, a result such as Theorem \ref{Thm:f y f-1 conjugados} does not hold in general. For instance, if there there is at least one marked point, from Theorem \ref{Thm:GMX pperiod} and \cite[Theorem 1.4]{lu01per} it follows that such a result can never occur.  
\end{rmk}

\medskip

Having this result, we can prove one of our main results.

\medskip

\begin{thm}[{\bf Lower bound for the $p$-period of  $\modn$}]\label{Thm:LowerBound}
	Let $k\geqslant 0$, $g\geqslant 3$ and $p$ be an odd prime. If the group $\modn$ has $p$-periodic Farrell cohomology, then the $p$-period is bounded below by $4$, that is, $\per (\modn ) \geqslant 4$.
\end{thm}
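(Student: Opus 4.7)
The plan is to reduce the lower bound on $\per(\modn)$ to the existence of a single nontrivial self‑conjugacy among the powers of an order‑$p$ element, and then to invoke Theorem \ref{Thm:f y f-1 conjugados} to produce such a conjugacy.

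First I would apply Theorem \ref{Thm:GMX pperiod}. Since $\modn$ has finite $\vcd$, is $p$-periodic by hypothesis, and has only finitely many conjugacy classes of subgroups of order $p$ (a standard consequence of finite $\vcd$ together with $p$-periodicity), the theorem yields
$$\per(\modn) = 2\cdot \lcm\bigl\{[N_{\modn}(\pi):C_{\modn}(\pi)] \mid \pi\in S\bigr\}\cdot p^d.$$
To conclude $\per(\modn)\geqslant 4$ it therefore suffices to exhibit one subgroup $\pi\leqslant \modn$ of order $p$ with $[N_{\modn}(\pi):C_{\modn}(\pi)]\geqslant 2$. By the bijection recalled just after Theorem \ref{Thm:GMX pperiod}, this index equals the number of $m\in\{1,\ldots,p-1\}$ for which a generator $\alpha$ of $\pi$ is $\modn$-conjugate to $\alpha^m$. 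Because $p$ is odd we have $p-1\neq 1$, so it is enough to find $\alpha$ that is conjugate to $\alpha^{-1}=\alpha^{p-1}$.

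Next, I pick any $\pi\leqslant \modn$ of order $p$ (which exists since $\modn$ has $p$-torsion) and lift it via the Nielsen realization theorem for non-orientable surfaces \cite[Theorem 5.2]{Col_Xi22} to a cyclic subgroup $\widetilde\pi\leqslant \diffnk$ of order $p$ mapping isomorphically onto $\pi$. If $f$ generates $\widetilde\pi$, then $\alpha:=[f]$ generates $\pi$. Applying Theorem \ref{Thm:f y f-1 conjugados} to $f$ produces $h\in \homek$ with $hfh^{-1}=f^{-1}$. Since for surfaces the natural map $\pi_0(\diffnk)\to \pi_0(\homek)$ is a bijection, $[h]$ defines a class in $\modn$ and
$$[h]\,\alpha\,[h]^{-1}=[hfh^{-1}]=[f^{-1}]=\alpha^{-1},$$
which gives the required nontrivial self‑conjugacy and so $\per(\modn)\geqslant 4$.

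The main obstacle is entirely concentrated in Theorem \ref{Thm:f y f-1 conjugados}; once that self‑conjugacy is granted, the proof of Theorem \ref{Thm:LowerBound} is an assembly step combining Brown‑type periodicity, the index/orbit bijection, Nielsen realization, and the standard smoothing identification between homeomorphism and diffeomorphism mapping class groups. The only subtle point worth flagging in the write-up is why a conjugacy initially witnessed in $\homek$ descends to $\modn$; this is precisely the reason Theorem \ref{Thm:f y f-1 conjugados} is stated at the level of homeomorphisms, since the deeper analysis of NEC groups and Nielsen fixed‑point data naturally lives in that category.
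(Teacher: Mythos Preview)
Your proposal is correct and follows essentially the same argument as the paper: apply Theorem \ref{Thm:GMX pperiod}, reduce via the $N/C$--orbit bijection to showing that a generator $\alpha$ is conjugate to $\alpha^{-1}$, lift $\alpha$ to an order-$p$ diffeomorphism $f$ via Nielsen realization, invoke Theorem \ref{Thm:f y f-1 conjugados} to conjugate $f$ to $f^{-1}$ inside $\homek$, and descend to $\modn$ using the identification $\pi_0(\diffnk)\cong\pi_0(\homek)$. The paper phrases the descent step as ``the canonical projection $\homek\to\modn$'' rather than the smoothing bijection, but this is the same fact.
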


\begin{proof}
    Let $\Zp= \langle \alpha \rangle $ be a subgroup of prime order $p$ in $\modn$. By the above discussion, we have a bijection 
    \begin{align*}
	   N_{\modn} ( \Zp )/C_{\modn} ( \Zp ) & \to  \{ m \in \{1,\ldots,p-1\} \mid \alpha^m \text{ is conjugate to } \alpha \}.  
    \end{align*}
    
     By the Nielsen realization theorem (see \cite{Col_Xi22}*{Theorem 5.2}) there exists a diffeomorphism $f\in \diffnk$ such that $f\in \alpha$ and $f^p=1$. By Theorem \ref{Thm:f y f-1 conjugados} we have that $f$ and $f^{-1}$ are conjugated in $\homek$. Thus, there exists $s\in \homek$ such that $s f s^{-1} = f^{-1}$. 
     Let $\beta$ denote the image of $s$ under the canonical projection $\homek \to \modn$, then $ \beta \alpha \beta^{-1}= \alpha^{-1}.$ 
     Thus, $\alpha$ is conjugated to $\alpha^{-1}$ in $\modn$, which implies that $[N_{\modn}(\Zp) : C_{\modn}(\Zp) ] \geqslant 2 $. 
     
     By Theorem \ref{Thm:GMX pperiod}, we have the following expression of the $p$-period
    $$\per(\modn)=2\cdot \lcm \{ [N_{\modn}(\Zp) : C_{\modn} (\Zp) ] \mid \Zp\in S \} \cdot p^d, $$
    where $S$ is a set of representatives of the conjugation classes of subgroups of order $p$ and $d\geqslant 0$. Thus, the formula entails
    $$\per(\modn) \geqslant 4 \cdot p^d \geqslant 4 ,$$
    which proves the result.
\end{proof}

\bigskip


\n
\textbf{Proof of the Theorem \ref{Thm:f y f-1 conjugados}.}

\medskip

In this section, we complete the proof of our main theorem by proving that $f$ and $f^{-1}$ are conjugated in $\homek$, whenever $f \in \diffnk $ is of order $p$.
For this purpose, we will use non-Euclidean crystallographic groups (\textit{NEC groups}, for short), which are similar to Fuchsian groups, but orientation-reversing isometries of the hyperbolic plane $\hiper$ are allowed. For a general discussion of this topic, we refer the reader to \cite[Section 0.2]{Bujalance90} and \cite[Section 1]{Bujalance10}.
The surface $N_g$ will be uniformized by a NEC group $K$ such that $f:\hiper/K \to \hiper/K$ is a dianalytic map.
To find a homeomorphism that conjugates the diffeomorphisms, $f$ and $f^{-1}$, we construct an automorphism of $K$ that connects $f$ and its inverse. 
The advantage of using NEC groups lies in the fact that every automorphism of a NEC group can be realized geometrically (see \cite[Theorem 3]{Macbeath67}). In this way, we can find a homeomorphism $ \tau:\hiper \to \hiper$ such that when we use the universal cover $q_K: \hiper \to \hiper / K$, this induces a homeomorphism $s: \hiper / K \to \hiper / K $ with the desired property $s\circ f\circ s^{-1}=f^{-1}$. 

\bigskip

\n
\textbf{Surface-kernel epimorphism.} Let $f\in \diffnk$ be an element of order $p$. By \cite[Proposition 1]{Tucker83}, the lifting $\widetilde{f}:=\tilde{\phi}(f)\in \diff (S_{g-1};2k)$ has a finite number of fixed points, thus $f$ also has a finite number of fixed points. Denote the fixed points of $f$ by $z_1,\ldots, z_t \in N_g$, with the convention that the first $k$ points are the marked points. We can endow the surface $N_g$ with a dianalytic structure $\X$ such that $\langle f \rangle$ is a group of automorphisms of the Klein surface $(N_g;\X)$ or, in other words, the mapping 
$$f:(N_g,\X)\to (N_g,\X)$$
is dianalytic. By the uniformization theorem of Klein surfaces, there exists a non-Euclidean crystallographic group $K$ isomorphic to the fundamental group of $N_g$ such that the quotient surface $\hiper / K$ is isomorphic to $N_g$ as Klein surfaces. 
Let $\gamma:\hiper \to \hiper$ be the lifting of the diffeomorphism $f:N_g\to N_g$ to the universal cover $q_K: \hiper \to \hiper / K$. Since $f$ is dianalytic, then $\gamma:\hiper \to \hiper$ is an isometry and this allows us to define
the NEC group
$$\Gamma:=\langle K, \gamma \rangle.$$
Therefore the quotient space $\hiper / \Gamma$ is homeomorphic to $N_g / \langle f \rangle$ which in turn is homeomorphic to $N_h$, where $h$ satisfies the Riemann-Hurwitz equation,
$$g-2=p(h-2)+t(p-1),$$
and $h\geqslant 1$ and $t\geqslant k$. On the other hand, it is not hard to see that $K \lhd \Gamma$, thus, we have a ramified covering $q:\hiper/ K \to \hiper / \Gamma$ and for every element $u\in \Gamma$ there exists an induced homeomorphism $\widehat{u}: \hiper /K \to \hiper / K $ defined for each $\zeta \in \hiper $ as 
$$ \zeta \cdot K \mapsto u(\zeta) \cdot K, $$
which makes the following diagram commutative
$$\cuadro{\hiper}{u}{\hiper}{}{}{N_g\cong \hiper/K}{\widehat{u}}{N_g\cong\hiper/K.}$$

We can see that $\widehat{u}:\hiper/K \to \hiper/K $ is a covering transformation for the ramified cover $q:\hiper / K \to \hiper / \Gamma $. We now define the epimorphism $\theta: \Gamma \to \langle f \rangle\cong\mathbb{Z}/p$ given by
$$\theta(u)=\widehat{u} \text{ \ \ \ for all \ \ \ } u\in \Gamma. $$
and in this way, a short exact sequence is obtained
$$1 \to K \to \Gamma \xrightarrow{\theta} \langle f \rangle \to 1.$$

In the literature,  the epimorphism $\theta$ is called \textit{smooth} or \textit{surface-kernel epimorphism}; see for example \cite[Section 1.4]{Bujalance10}. From now on, we will consider the surface $N_g$ as the quotient $\hiper / K$ and its elements are represented by $\zeta \cdot K$.

\bigskip

\n
\textbf{Canonical presentation of the NEC group $\Gamma$.}
Since the canonical projection $q_\Gamma:\hiper \to \hiper / \Gamma$ has $t $ branched points corresponding to the fixed points of $f$, whose ramification index is equal to $p$ and $\hiper / \Gamma$ is homeomorphic to $N_h$, then $\Gamma$ has an algebraic presentation given by (see \cite[Proposition 1.1.4]{Bujalance10}) 
\begin{equation}\label{Eqn:Canonical Pres de Gamma}
    \langle x_1 , \ldots , x_t, d_1, \ldots , d_h \mid x_1 \cdot \ldots \cdot x_t\cdot d_1^2 \cdot \ldots \cdot d_h^2 = x_1^p = \ldots = x_t^p = 1 \rangle,    
\end{equation}
where $x_1,\ldots , x_t$ are elliptic elements of $\Aut(\hiper)$ and $d_1,\ldots , d_h$ are glide reflections of $\Aut(\hiper)$. 
The above presentation will be called \textit{a canonical presentation} of $\Gamma$ and the generators will also be called \textit{canonical generators.} 
With this presentation, notice that the elliptic generators contain, in some sense, the information about the fixed points of $f$.

\medskip

\begin{rmk}\label{Rmk:Elliptic generators and fixed points}
    Consider the canonical presentation \eqref{Eqn:Canonical Pres de Gamma} of the group $\Gamma$. Let $\zeta_1, \ldots, \zeta_t \in \hiper$ be the fixed points of the elliptic canonical generators $x_1,\ldots, x_t$, respectively. By definition of $\theta: \Gamma \to \langle f \rangle $, we have that  
    $$ \theta(x_i) (\zeta_i \cdot K) := x_i(\zeta_i) \cdot K= \zeta_i \cdot K$$  and  $\theta(x_i) = f^{m_i} $ for some $1\leqslant m_i\leqslant p-1$.  Since $(m_i,p)=1$, we have that $ \zeta_i \cdot K $ is also a fixed point of $f$, for each $i=1,\ldots , t$.
\end{rmk}

\medskip

\begin{rmk}\label{Lem:Fixed point differs by Gamma}
    Suppose that $\zeta\cdot K \in \hiper/ K$ is a fixed point of $f$. Then for each $u\in \Gamma$ we have that
    $$u(\zeta)\cdot K = \zeta \cdot K .$$
\end{rmk}

\bigskip

\n
\textbf{Auxiliary epimorphisms $\theta_1$ and $\theta_2$.}
Given an element $u\in \Gamma$ it is clear that 
$$\theta(u)= f^{m_u}=(f^{-1})^{-m_u}\text{, for some $1 \leqslant m_u \leqslant p$}.$$
We will consider $m_u$ as an element of $\Zp$, to avoid problems with the range from which we can select $m_u$. We define the epimorphisms $\theta_1 : \Gamma \to \Zp$ and $\theta_2 :\Gamma \to \Zp$ given by $\theta_1(u)=m_u$ and $\theta_2(u)=-m_u$. Observe that, by definition 
\begin{align*}
    \theta(u)=&f^{\theta_1(u)} & & \text{ and }  &  \theta(u)=& (f^{-1})^{\theta_2(u)}
\end{align*}
where, abusing the notation $\theta_1(u)$ and $\theta_2(u)$ are thought of as integers and not as classes of $\Zp$. With this observation, we can see that the epimorphisms $\theta_1$ and $\theta_2$ give us a distinction on how to take a preferred generator from the group of covering transformations of $q:\hiper / K \to \hiper / \Gamma$. Namely, for $\theta_1$ we take $f$ as the preferred generator while for $\theta_2$ we take $f^{-1}$. \\

The following result connects the two epimorphisms $\theta_1$ and $\theta_2$ by an isomorphism $\psi:\Gamma \to \Gamma$. This result, combined with the previous discussion that $\theta_1$ and $\theta_2$ contain information on the choice of a preferred generator of $\Zp$ (which are $f$ and $f^{-1}$), gives us the guideline to prove that $f$ and $f^{-1}$ are conjugate, as will be seen in later results.

\begin{lem}\label{Lem:Isomorphism of Gamma}

 For the epimorphisms $\theta_1: \Gamma \to \Zp$ and $ \theta_2: \Gamma \to \Zp$ there exists an isomorphism $\psi: \Gamma \to \Gamma$ such that the following diagram is commutative:
\begin{equation}\label{Eqn:Diagram connection}
    \xymatrix @R=3mm @C=10mm {
\Gamma \ar[rd]^-{\theta_1} \ar[dd]_-{\psi} & \\
    &   \Zp \\
\Gamma \ar[ru]_-{\theta_2} & 
}
\end{equation}
Moreover, if $\zeta_i\in \hiper$ is the fixed point of the elliptic generator $x_i$, then there exists $u_i\in \Gamma$ such that the elliptic generator $\psi(x_i)$ has  $u_i (\zeta_i)\in \hiper $ as  a fixed point.
\end{lem}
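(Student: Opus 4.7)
The plan is to construct the automorphism $\psi$ explicitly by prescribing its action on the canonical generators of $\Gamma$, then verify that it defines a group homomorphism and that it is invertible. Commutativity of the diagram \eqref{Eqn:Diagram connection} is equivalent to requiring $\theta(\psi(u)) = \theta(u)^{-1}$ for every $u \in \Gamma$; that is, $\psi$ should intertwine the two surface-kernel epimorphisms $\theta$ and $\theta^{-1} \colon \Gamma \to \langle f \rangle$, where $\theta^{-1}(u) := \theta(u)^{-1}$. These two epimorphisms share the kernel $K$ and differ only in which generator of $\langle f \rangle \cong \Zp$ is preferred, so the existence of $\psi$ amounts to showing that these epimorphisms are equivalent via an automorphism of $\Gamma$.

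Guided by the ``moreover'' clause, I would set $\psi(x_i) := u_i\, x_i^{-1}\, u_i^{-1}$ for suitably chosen $u_i \in \Gamma$. With this definition the element $\psi(x_i)$ is automatically elliptic of order $p$, it satisfies $\theta(\psi(x_i)) = \theta(x_i)^{-1}$, and its fixed point in $\hiper$ is precisely $u_i(\zeta_i)$, as required. For each glide-reflection generator $d_j$, I would set $\psi(d_j)$ to be a glide reflection whose image under $\theta$ equals $\theta(d_j)^{-1}$, typically of the form $v_j\, d_j^{-1}\, v_j^{-1}$ for some $v_j \in \Gamma$ yet to be determined. The existence of such a $\psi$ as a legitimate automorphism of $\Gamma$ rests on the structure theory of automorphisms of NEC groups and of surface-kernel epimorphisms onto finite cyclic groups; see \cite{Macbeath67}, \cite{Bujalance10} and \cite[Section 3]{Bujalance15}.

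The principal obstacle is the consistent verification that the proposed $\psi$ preserves the long defining relation $x_1 \cdots x_t\, d_1^2 \cdots d_h^2 = 1$. The elements $u_i$ and $v_j$ must be calibrated so that the word $\psi(x_1) \cdots \psi(x_t)\, \psi(d_1)^2 \cdots \psi(d_h)^2$ reduces to the identity, using the original relation and careful bookkeeping with the cyclic order of the generators. It is precisely here that the non-orientability of the quotient orbifold $\hiper/\Gamma \cong N_h$ (and hence $h \geq 1$) becomes essential: the glide reflections $d_j$ provide the algebraic flexibility to absorb the sign changes introduced by inverting the elliptic generators. This is consistent with the failure of the orientable analogue alluded to in Remark~\ref{Rmk:2Periodicity}-style considerations, and with the fact that the analogue of Theorem~\ref{Thm:f y f-1 conjugados} for orientable surfaces is false. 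Once the relation is verified, $\psi$ is a well-defined endomorphism; its bijectivity is obtained by applying the same construction to $\theta^{-1}$ in place of $\theta$, which produces a two-sided inverse of $\psi$ (up to an inner automorphism absorbed by the freedom in choosing the $u_i, v_j$). The ``moreover'' conclusion is then immediate from the explicit form $\psi(x_i) = u_i\, x_i^{-1}\, u_i^{-1}$.
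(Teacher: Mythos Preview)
Your outline is the same strategy as the paper's proof: define $\psi$ on the canonical generators by sending each $x_i$ to a conjugate of $x_i^{-1}$ and each $d_j$ to an element mapping to $-\theta_2(d_j)$, then verify the long relation and the diagram. The ``moreover'' clause indeed follows immediately from the shape $\psi(x_i)=u_i x_i^{-1} u_i^{-1}$, exactly as you say.

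The gap is that you never produce the conjugators. Everything you write is conditional on ``suitably chosen $u_i$'' and ``$v_j$ yet to be determined'', and you defer their existence to unspecified structure theory of NEC automorphisms. But the content of the lemma \emph{is} the explicit choice; without it there is no proof, only a plan. The paper fills in precisely this blank. With $\eta = x_1\cdots x_t\, d_1$, $\chi_i = x_{i+1}\cdots x_t$ (and $\chi_t=1$), $\delta_j = d_{j+1}^2\cdots d_h^2$ (and $\delta_h=1$), it sets
\[
\psi(x_i)=\eta\,\chi_i\, x_i^{-1}\,\chi_i^{-1}\,\eta^{-1},\qquad
\psi(d_1)=\eta^2\,\delta_1\,\eta^{-1},\qquad
\psi(d_j)=\eta\,\delta_j\, d_j^{-1}\,\delta_j^{-1}\,\eta^{-1}\ (j\geqslant 2),
\]
and then checks directly that the defining relation is preserved and that $\theta_2\circ\psi=\theta_1$ on generators. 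Note that $d_1$ is \emph{not} sent to a conjugate of $d_1^{-1}$: the computation gives $\theta_2(\psi(d_1))=\theta_2(d_1)+\sum_i\theta_2(x_i)+2\sum_{j\geqslant 2}\theta_2(d_j)$, which equals $-\theta_2(d_1)$ only after invoking the image of the long relation in $\Zp$. This special treatment of $d_1$ is exactly the ``absorption by the glide reflections'' you allude to, and it is where $h\geqslant 1$ is used. Your uniform ansatz $\psi(d_j)=v_j d_j^{-1} v_j^{-1}$ for all $j$ may also be realizable, but you would still owe the reader concrete $v_j$ making the relation collapse; the paper sidesteps this by allowing $\psi(d_1)$ a different shape. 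Finally, with explicit formulas in hand bijectivity is a direct check, so the paper does not need your inverse-construction argument.
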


\begin{proof}
Consider the canonical presentation \eqref{Eqn:Canonical Pres de Gamma} of $\Gamma$  and define the following elements of $\Gamma$:
\begin{align*}
    \eta = & x_1\cdot \ldots \cdot x_t \cdot d_1, & &  \\
    \chi_i = & x_{i+1}\cdot x_{i+2}\cdot \ldots \cdot x_t & \text{for } & i=1,\ldots, t-1, &  \chi_t = & 1,  \\
    \delta_j = & d_{j+1}^2\cdot d_{j+2}^2\cdot \ldots \cdot d_h^2 & \text{for } & j=1,\ldots, h-1, & \delta_h = & 1.     
\end{align*}
We define the function $\psi: \Gamma \to \Gamma$ at the level of generators as follows:
\begin{align*}
    x_i & \mapsto \eta \cdot \chi_i \cdot x_i^{-1} \cdot \chi_i^{-1} \cdot \eta^{-1} & \text{for } & i=1,\ldots t \\  d_1 & \mapsto \eta^2 \cdot \delta_1 \cdot \eta^{-1} & & \\
    d_j & \mapsto \eta \cdot \delta_j \cdot d_j^{-1} \cdot \delta_j^{-1} \cdot \eta^{-1} & \text{for } & j=2,\ldots h. 
\end{align*}

Since the relation of the group $\Gamma$ is preserved by $\psi$, it follows that $\psi $ defines a group homomorphism from $\Gamma$ to $\Gamma$. Moreover, it can be checked that  $\psi$ is  an isomorphism. 

On the other hand, notice that $\psi$ satisfies the following properties:
\begin{align*}
    \theta_2(\psi (x_i ))= & \theta_2( \eta \cdot \chi_i \cdot x_i^{-1} \cdot \chi_i^{-1} \cdot \eta^{-1} ) = -\theta_2(x_i) & \text{for } & i=1,\ldots t, \\
    \theta_2(\psi(d_1))= & \theta_2( \eta^2 \cdot \delta_1 \cdot \eta^{-1} ) = \theta_2(d_1) + \sum_{i=1}^{t} \theta_2(x_i) + 2\cdot \sum_{j=2}^{h}  \theta_2(d_j) & & \\
    \theta_2(\psi(d_j)) = & \theta_2( \eta \cdot \delta_j \cdot d_j^{-1} \cdot \delta_j^{-1} \cdot \eta^{-1} ) = -\theta_2( d_j ) & \text{for } & j=2,\ldots h. 
\end{align*}

Since the generators of the group $\Gamma$ satisfy the relation $ x_1 \cdot \ldots \cdot x_t\cdot d_1^2 \cdot \ldots \cdot d_h^2 = 1$, it follows that 
$$ - \theta_2(d_1) = \theta_2(d_1) + \sum_{i=1}^{t} \theta_2(x_i) + 2\cdot \sum_{j=2}^{h} \theta_2(d_j),$$
which implies that 
\begin{align*}
	\theta_2(\psi(x_i))=& -\theta_2(x_i)=\theta_1(x_i)   	&  &\text{for all \ \ \ } i=1,\ldots, t, \\
	\theta_2(\psi(d_j))=& - \theta_2(d_j)=\theta_1(d_j) 	&  &\text{for all \ \ \ } j=1, 2,\ldots, h,
\end{align*}
since, by the definition of $\theta_2:\Gamma \to \Zp$, we have $\theta_1(u)=-\theta_2(u)$ for all $u\in \Gamma$. 
It follows that the condition holds for all $u\in \Gamma$, that is, $\theta_2(\psi(u))=\theta_1(u)$. Therefore, $\psi:\Gamma \to \Gamma$ is the desired isomorphism.

Finally, by the definition of $\psi$, for each $i=1,\ldots , t$, if $\zeta_i$ is the fixed point of the elliptic generator $x_i$, then $ u_i(\zeta_i) \in \hiper $ is the fixed point of $\psi(x_i)$, where $u_i= \eta \cdot \chi_i $. This completes the proof.
\end{proof}


\bigskip

We now proceed to prove that if $f\in \diffnk$ is of order $p$, then $f$ and $f^{-1}$ are conjugated in $\homek$.

\medskip

\begin{proof}[Proof of the Theorem \ref{Thm:f y f-1 conjugados}]
Let $K$ be the NEC surface group such that $f: \hiper / K\to \hiper / K $ is an isometry and $\gamma:\hiper\to \hiper$ the lifting of $f$ to the universal cover $q_K:\hiper \to \hiper / K $. Consider the NEC group $\Gamma=\langle K,\gamma \rangle $, the surface kernel epimorphism $\theta: \Gamma \to \langle f \rangle$ and the two auxiliary epimorphisms $\theta_1,\theta_2:\Gamma \to \Zp$ defined above. By Lemma \ref{Lem:Isomorphism of Gamma}, we can construct an isomorphism $ \psi:\Gamma\to \Gamma $ such that the diagram \eqref{Eqn:Diagram connection} is commutative and if the fixed points of the elliptic generators are $\zeta_i\in \hiper$, then the fixed points of $\psi(x_i)$ are equal to $u_i (\zeta_i)\in \hiper$, for some $u_i \in \Gamma$. By \cite[Theorem 3]{Macbeath67}, the isomorphism $\psi:\Gamma \to \Gamma$ is realized geometrically, this means that there exists a homeomorphism $\tau:\hiper \to \hiper$ such that
\begin{equation}\label{Eqn:Geometric realization psi}
	\psi(u)=\tau u \tau^{-1} \text{ \ \ \ for all \ \ \ } u \in \Gamma.
\end{equation}

Now, by the commutativity of the diagram \eqref{Eqn:Diagram connection}, we can see that $\psi(\ker(\theta_1))=\ker(\theta_2)$. But $K=\ker (\theta_1)= \ker(\theta_2)$, which implies that $\psi|_K:K\to K$ is an automorphism of $K$. Thus, $\tau: \hiper \to \hiper$ induces the following homeomorphisms
\begin{align*}
\widehat{s}:\hiper/\Gamma & \to \hiper / \Gamma  & s:\hiper/ K & \to \hiper / K \\
 \zeta \cdot \Gamma & \mapsto  \tau(\zeta) \cdot \Gamma  &  \zeta \cdot K\ & \mapsto  \tau(\zeta) \cdot K,
\end{align*}
and these are such that the following diagram is commutative
$$\xymatrix @C=1cm @M=2mm{
	\hiper \ar[r]^-{\tau}  \ar[d] & \hiper \ar[d]  \\
	\hiper/ K \ar[r]^-{s} \ar[d] & \hiper / K \ar[d]\\
	\hiper/ \Gamma \ar[r]^-{\widehat{s}} & \hiper / \Gamma. 	
	 }$$

By definition of the epimorphism $\theta: \Gamma \to \langle f \rangle$ and using the above diagram, it follows that 
$\theta(\tau \gamma \tau^{-1})=s \circ f \circ s^{-1}.$
On the other hand, by definition of $\theta_2: \Gamma \to \Zp$, we have that $\theta(\tau \gamma \tau^{-1})=(f^{-1})^{\theta_2(\tau \gamma \tau^{-1})}$, but from the diagram \eqref{Eqn:Diagram connection} and the equation \eqref{Eqn:Geometric realization psi} it follows that
$ \theta_2 (  \tau \gamma \tau^{-1}  ) = \theta_2(\psi(\gamma))= \theta_1 (\gamma) = 1, $
therefore
$$ s \circ f \circ s^{-1}=f^{-1} .$$

It remains to prove that $s\in \homek$. According to the Lemma \ref{Lem:Isomorphism of Gamma}, for each $i=1,\ldots, \ldots ,t$ if $\zeta_i\in \hiper $ is the fixed point of the elliptic generator $x_i$, then the fixed point of $\psi(x_i)$ is $u_i ( \zeta_ i )$ for some $u_i\in \Gamma $. 
Applying $\psi$ to each of the elliptic generators and by the equation (\ref{Eqn:Geometric realization psi}) we have that
$ \psi(x_i) = \tau \ x_i \ \tau^{-1} .$
Thus, $ \tau ( \zeta_ i )  $ is a fixed point of $\psi(x_i)$. Since the elliptic transformation $\psi(x_i)$ only has one fixed point in $\hiper$, it follows that 
$\tau ( \zeta_ i ) =u_i ( \zeta_ i ).$
This implies, by Remark \ref{Lem:Fixed point differs by Gamma} and the definition of $s$ that 
    $$ s(\zeta_i \cdot K) =  \tau (  \zeta_ i  ) \cdot K = u_i(\zeta_i ) \cdot K=  \zeta_i \cdot K.$$

Moreover, as we pointed out in  Remark \ref{Rmk:Elliptic generators and fixed points}, the points $\zeta_i \cdot K$ are the fixed points of $f$. Hence, the marked points of $N_g$ remain fixed by $s$. Therefore the homeomorphism $s\in \homek$, which completes the proof.
\end{proof}

\bigskip

\section*{Acknowledgements}
This work started as part of the Ph.D. dissertation of the first author. He would like to thank the other two authors for their support and valuable comments while this work was conducted, and the financial support of CONACYT through the Ph.D. scholarship No. 494867. The first two authors acknowledge funding from CONAHCYT grant CF 2019-217392. All authors are grateful for the financial support of CONAHCYT grant CB-2017-2018-A1-S-30345.

\bigskip

\bibliography{References}

\end{document}